\title{\Large \bf On the Stability of Continuous-Discontinuous Galerkin Methods for Advection-Diffusion-Reaction Problems} 
\date{\today}
\author[1]{\normalsize Andrea Cangiani\thanks{andrea.cangiani@le.ac.uk}}
\author[2]{\normalsize John Chapman\thanks{john.chapman@durham.ac.uk}}
\author[1]{\normalsize Emmanuil H. Georgoulis\thanks{emmanuil.georgoulis@le.ac.uk}}
\author[2]{\normalsize Max Jensen\thanks{m.p.j.jensen@durham.ac.uk}}
\affil[1]{\small Department of Mathematics, University of Leicester, Leicester, United Kingdom}
\affil[2]{\small Department of Mathematics, University of Durham, Durham, United Kingdom}
\definecolor{ltgray}{gray}{0.95}
\DeclareMathAlphabet{\mathpzc}{OT1}{pzc}{m}{it}
\DeclareMathAlphabet{\mathcalligra}{T1}{calligra}{m}{n}
\definecolor{refkey}{rgb}{1,0,0}
\definecolor{labelkey}{rgb}{1,0,0}
\newcommand*\patchAmsMathEnvironmentForLineno[1]{%
  \expandafter\let\csname old#1\expandafter\endcsname\csname #1\endcsname
  \expandafter\let\csname oldend#1\expandafter\endcsname\csname end#1\endcsname
  \renewenvironment{#1}%
     {\linenomath\csname old#1\endcsname}%
     {\csname oldend#1\endcsname\endlinenomath}}%
\newcommand*\patchBothAmsMathEnvironmentsForLineno[1]{%
  \patchAmsMathEnvironmentForLineno{#1}%
  \patchAmsMathEnvironmentForLineno{#1*}}%
\tiny\color{red},  % the style that is used for the line-numbers
\newtheorem{theorem}[equation]{Theorem}
\newtheorem{definition}[equation]{Definition}
\newtheorem{lemma}[equation]{Lemma}
\newtheorem{assumption}[equation]{Assumption}
\newtheorem{remark}[equation]{Remark}
\numberwithin{equation}{section}
\numberwithin{figure}{section}
\numberwithin{table}{section}
\newcommand{\bdm}{\begin{displaymath}}
\newcommand{\edm}{\end{displaymath}}
\newcommand{\beq}{\begin{equation}}
\newcommand{\eeq}{\end{equation}}
\newcommand{\newVar}[2]{\newcommand{#1}{\ensuremath{#2}\xspace}}
\newVar\Naturals{\mathbb{N}}
\newVar\Integers{\mathbb{Z}}
\newVar\Rationals{\mathbb{Q}}
\newVar\Reals{\mathbb{R}}
\newVar\Complex{\mathbb{C}}
\newVar\Id{\mathbb{I}}
\newcommand{\norm}[1]{\ensuremath{\lVert#1\rVert}}
\newcommand{\triple}[1]{\ensuremath{|\hspace{-1pt}|\hspace{-1pt}| #1 |\hspace{-1pt}|\hspace{-1pt}|}}
\newcommand{\abs}[1]{\ensuremath{\lvert#1\rvert}}
\newcommand{\set}[1]{\ensuremath{\{#1\}}}
\newcommand{\Average}[1]{\ensuremath{\{ \! \! \{ #1 \}  \! \! \}} }
\newcommand{\average}[1]{\ensuremath{\Average{#1}}}
\newcommand{\Jump}[1]{\ensuremath{\llbracket #1 \rrbracket}}
\newcommand{\jump}[1]{\ensuremath{\Jump{#1}}}
\newcommand{\Order}[1]{{\ensuremath{\mathcal{O}(#1)}}}
\newVar\pqTree{{\cal P}}
\newVar\groups{\mathbf{G}}
\newVar\lastGroup{\ell}
\newVar{\currentId}{\ensuremath{\id}\xspace}
\renewcommand\epsilon{\varepsilon}
\renewcommand\phi{\varphi}
\renewcommand\theta{\vartheta}
\renewcommand{\exp}[1]{{\ensuremath{\mathrm{e}^{#1}}}}
\newcommand{\vectau}[0]{{\ensuremath{\boldsymbol{\tau}}}}
\newcommand{\vecb}[0]{{\ensuremath{b}}}
\newcommand{\vecx}[0]{{\ensuremath{\boldsymbol{x}}}}
\newcommand{\bdotn}[0]{{\ensuremath{\vecb \cdot n}}}
\newcommand{\Poly}[0]{{\ensuremath{\mathbb{P}}}}
\newcommand{\half}[0]{{\ensuremath{\frac{1}{2}}}}
\newcommand{\fhalf}[0]{{\ensuremath{\nicefrac{1}{2}}}}
\newcommand{\nablah}[0]{{\ensuremath{\nabla_h}}}
\newcommand{\bdnab}[0]{{\ensuremath{\vecb\cdot\nablah}}}
\newcommand{\erf}[0]{{\ensuremath{\mathrm{Erf}}}}
\newcommand{\cb}[0]{{\ensuremath{r}}}
\newcommand{\cbhalf}[0]{{\ensuremath{r^{\fhalf}}}}
\newcommand{\cdG}[0]{{\ensuremath{\textrm{cdG}}}}
\newcommand{\dG}[0]{{\ensuremath{\textrm{dG}}}}
\newcommand{\cG}[0]{{\ensuremath{\textrm{cG}}}}
\providecommand{\C}[0]{}
\renewcommand{\C}[0]{{\ensuremath{\mathcal{C}}}}
\newcommand{\D}[0]{{\ensuremath{\mathcal{D}}}}
\newcommand{\VcG}[1]{{\ensuremath{V_\cG^{#1}}}}
\newcommand{\VdG}[1]{{\ensuremath{V_\dG^{#1}}}}
\newcommand{\VcdG}[1]{{\ensuremath{V_\cdG^{#1}}}}
\newcommand{\Bform}[0]{{\ensuremath{\mathcal{B}}}}
\newcommand{\Beps}[0]{{\ensuremath{\Bform_\epsilon}}}
\newcommand{\Btilde}[1]{{\ensuremath{\tilde{\Bform}_{#1}}}}
\newcommand{\vh}[0]{{\ensuremath{v_h}}}
\newcommand{\vhtilde}[0]{{\ensuremath{\tilde{v}_h}}}
\newcommand{\votilde}[0]{{\ensuremath{\tilde{v}_0}}}
\newcommand{\vepstilde}[0]{{\ensuremath{\tilde{v}_\epsilon}}}
\newcommand{\va}[0]{{\ensuremath{v_{\mathcal{A}}}}}
\newcommand{\vpitilde}[0]{{\ensuremath{\tilde{v}_\pi}}}
\newcommand{\SDG}[0]{{\ensuremath{S}}}
\newcommand{\SDGr}[0]{{\ensuremath{S}}}
\newcommand{\ws}[0]{{\ensuremath{v_\SDG}}}
\newcommand{\what}[0]{{\ensuremath{\hat{w}}}}
\newcommand{\vhat}[0]{{\ensuremath{\hat{v}}}}
\newcommand{\cont}[0]{\ensuremath{\cG}}
\newcommand{\discont}[0]{\ensuremath{\dG}}
\newcommand{\T}[0]{{\ensuremath{\mathcal{T}}}}
\newcommand{\Th}[0]{{\ensuremath{\mathcal{T}_h}}}
\newcommand{\TD}[0]{{\ensuremath{\mathcal{T}_{\discont}}}}
\newcommand{\TC}[0]{{\ensuremath{\mathcal{T}_{\cont}}}}
\newcommand{\EinTh}[0]{{\ensuremath{E \in \Th}}}
\newcommand{\Eho}[0]{\ensuremath{{\mathcal{E}_h^o}}}
\newcommand{\Eh}[0]{\ensuremath{{\mathcal{E}_h}}}
\newcommand{\EhC}[0]{\ensuremath{{\mathcal{E}_\cont}}}
\newcommand{\EhD}[0]{\ensuremath{{\mathcal{E}_\discont}}}
\newcommand{\GC}[0]{\ensuremath{{\Gamma_\cont}}}
\newcommand{\GD}[0]{\ensuremath{{\Gamma_\discont}}}
\newcommand{\insymbol}[0]{\ensuremath{\text{in}}}
\newcommand{\outsymbol}[0]{\ensuremath{\text{out}}}
\newcommand{\Gout}[0]{\ensuremath{\Gamma^{\outsymbol}}}
\newcommand{\Gin}[0]{\ensuremath{\Gamma^{\insymbol}}}
\newcommand{\bout}[0]{\ensuremath{\partial^{\outsymbol}}}
\newcommand{\bin}[0]{\ensuremath{\partial^{\insymbol}}}
\newcommand{\einEh}[0]{{\ensuremath{e \in \Eh}}}
\newcommand{\einEho}[0]{{\ensuremath{e \in \Eho}}}
\newcommand{\pE}[0]{{\ensuremath{\partial E}}}
\newcommand{\pO}[0]{{\ensuremath{\partial \Omega}}}
\newcommand{\OD}[0]{{\ensuremath{\Omega_\discont}}}
\newcommand{\OC}[0]{{\ensuremath{\Omega_\cont}}}
\newcommand{\patchE}[0]{{\ensuremath{\Delta_E}}}
\newcommand{\dt}[0]{{\ensuremath{\,\mathrm{d}t}}}
\newcommand{\dx}[0]{{\ensuremath{\,\mathrm{d}\vecx}}}
\newcommand{\ds}[0]{{\ensuremath{\,\mathrm{d}s}}}
\newcommand{\SZ}[1]{{\ensuremath{\mathcal{S\!Z}_h(#1)}}} %Scott-Zhang
\newcommand{\Proj}[2]{{\ensuremath{\Pi^{#1}_{#2}}}}
\renewcommand{\div}[0]{{\ensuremath{\mathrm{div}}}}
\newcommand{\SH}[2]{{\ensuremath{H^{#1}(#2)}}}
\newcommand{\SW}[3]{{\ensuremath{W^{#1,#2}(#3)}}}
\newcommand{\SL}[2]{{\ensuremath{L^{#1}(#2)}}}
\newcommand{\Cti}[0]{{\ensuremath{C_{\mathrm{ti}}}}}
\newcommand{\Lis}[0]{{\ensuremath{\Lambda_{\mathrm{is}}}}}
\newcommand{\Csz}[0]{{\ensuremath{C_{\mathrm{sz}}}}}
\renewcommand{\exp}[1]{{\ensuremath{\text{e}^{#1}}}}
\renewcommand{\div}[0]{{\ensuremath{\nabla \cdot \,}}}
\definecolor{dred}{RGB}{180,90,90}
\definecolor{dgreen}{RGB}{70,140,70}
\definecolor{dblue}{RGB}{100,100,180}
\definecolor{purple}{RGB}{139,0,204}
\begin{document}

\maketitle

%%%%%%%%%%%%%%%%%%%%%%%%%%%%%%%%%%%%%%%%%%%%%%%%%%%%%%%%%%%%%%%%%%%%%%%%%%%%%%%%
%%%%%%%%%%%%%%%%%%%%%%%%%%%%%%%%%%%%%%%%%%%%%%%%%%%%%%%%%%%%%%%%%%%%%%%%%%%%%%%%
%Abstract
%%%%%%%%%%%%%%%%%%%%%%%%%%%%%%%%%%%%%%%%%%%%%%%%%%%%%%%%%%%%%%%%%%%%%%%%%%%%%%%%
% 
%%%%%%%%%%%%%%%%%%%%%%%%%%%%%%%%%%%%%%%%%%%%%%%%%%%%%%%%%%%%%%%%%%%%%%%%%%%%%%%%
\begin{abstract}
We consider a finite element method which couples the continuous Galerkin method away from internal and boundary layers with a discontinuous Galerkin method in the vicinity of layers. We prove that this consistent method is stable in the streamline diffusion norm if the convection field flows non-characteristically from the region of the continuous Galerkin to the region of the discontinuous Galerkin method. The stability properties of the coupled method are illustrated by numerical experiments.
\end{abstract}

%\keywords{stabilised finite element methods, singularly-perturbed equations} 
%\subclass{65M60, 76M10}

%%%%%%%%%%%%%%%%%%%%%%%%%%%%%%%%%%%%%%%%%%%%%%%%%%%%%%%%%%%%%%%%%%%%%%%%%%%%%%%%
%%%%%%%%%%%%%%%%%%%%%%%%%%%%%%%%%%%%%%%%%%%%%%%%%%%%%%%%%%%%%%%%%%%%%%%%%%%%%%%%
%
%%%%%%%%%%%%%%%%%%%%%%%%%%%%%%%%%%%%%%%%%%%%%%%%%%%%%%%%%%%%%%%%%%%%%%%%%%%%%%%%
\section{Introduction}  \label{sec:Introduction}
%%%%%%%%%%%%%%%%%%%%%%%%%%%%%%%%%%%%%%%%%%%%%%%%%%%%%%%%%%%%%%%%%%%%%%%%%%%%%%%%

It is well known that the standard continuous Galerkin (cG) finite element method exhibits poor stability properties for singularly perturbed problems. In presence of sharp boundary or interior layers, non physical oscillations pollute the numerical approximation throughout the solution domain, and thus stabilisation techniques need to be employed; see~\cite{RST08} for a survey. The discontinuous Galerkin (dG) method offers a framework for the design of finite element methods with good stability properties; see, e.g., \cite{CKS00} for a survey of their development. This is achieved by relaxing the continuity requirements at the inter-element boundaries, where appropriate upwinded numerical fluxes can be employed. One such dG method is the Interior Penalty (IP) method considered herein.
%As the name suggests the dG method uses a trial and test space with no requirement for continuity between elements.

However dG methods require more degrees of freedom (DOFs) compared to the standard cG method. In this work, we investigate if some of these additional DOFs can be removed without affecting the stability properties of the dG method. The motivation of this work is twofold: firstly we wish to improve our understanding of the well-established dG methods in response to the criticism in the increased number of DOFs; secondly we seek insight into the design of more efficient finite element methods, allowing to combine advantages the dG framework with smaller approximation spaces.

Conceptually we envisage in this work a finite element space that lies between the standard continuous and discontinuous Galerkin spaces. We construct such a space by applying standard continuous elements away from any boundary or internal layers (called the cG region) and discontinuous elements in the region of such layers (called the dG region). Therefore we call this method the \emph{continuous discontinuous Galerkin} (cdG) finite element method.

In the broader sense, methods of this type have been proposed before by Becker, Burman, Hansbo and Larson \cite{BBHL04}, Perugia and Sch\"{o}tzau \cite{PS01}, and Dawson and Proft \cite{DP02}. In \cite{BBHL04} a globally reduced discontinuous Galerkin method is studied, whose approximation space consists of the continuous piecewise linear functions enriched with piecewise constant functions. In \cite{PS01}, \cite{DP02} the local discontinuous Galerkin method \cite{CS98} has been used on the discontinuous region with transmission conditions on the subset of interelement boundaries where continuous and discontinuous elements meet. Our approach is different to \cite{PS01}, \cite{DP02} as we impose no such conditions beyond those already imposed by the dG method.
Removing the transmission conditions makes the approach more natural, but introduces some difficulties as possible over- and undershoots at the interface of the cG and dG regions must be controlled.
Indeed, the analysis is limited to the case where the cdG interface is non-characteristic so that jumps across the interface are controlled by the convection term. This allows us to derive a rigorous stabilty bound for the cdG approximation. On the cG region we split the numerical solution into an approximation to the hyperbolic problem which is weakly dependent on the diffusion coefficient $\epsilon$ and an approximation to the remaining part, which depends more strongly on $\epsilon$ but is small in size. Stability of the approximation on the dG region is shown using the approach of Buffa, Hughes and Sangali \cite{BHS06} with some extensions in the manner of Ayuso and Marini \cite{AM09}. To our knowledge this paper is the first to present a stability result for the proposed cdG method. 

Comparisons of our cdG method with various dG and cG methods have been undertaken by Cangiani, Georgoulis and Jensen \cite{CGJ06} and Devloo, Forti and Gomes \cite{DFG07}; see also \cite{CCGJ12c} which studies the cdG approximation as the limit of a dG approximation as the jump penalization on interelement boundaries tends to infinity.

The remainder of this work is organized as follows. In Section \ref{sec:formulation} we formulate the problem, introduce notation, and explain the assumptions we make. %Then we consider the stability on the discontinuous region in Section \ref{sec:hypcontrol}. and  a decoupled and a standard formulations of the dG method
Control on the continuous region is considered in several stages in Section \ref{sec:epscontrol} and Section \ref{sec:hypcontrol}. Section \ref{sec:infsup} presents an inf-sup bound on the discontinuous region. The main result of the paper showing control independent of the perturbation parameter $\epsilon$ and mesh size $h$ follows in Section \ref{sec:combined}. The theoretical results are illustrated in Section \ref{sec:NumericalExperiments} by a numerical experiment.

%%%%%%%%%%%%%%%%%%%%%%%%%%%%%%%%%%%%%%%%%%%%%%%%%%%%%%%%%%%%%%%%%%%%%%%%%%%%%%%%
%%%%%%%%%%%%%%%%%%%%%%%%%%%%%%%%%%%%%%%%%%%%%%%%%%%%%%%%%%%%%%%%%%%%%%%%%%%%%%%%
%
%%%%%%%%%%%%%%%%%%%%%%%%%%%%%%%%%%%%%%%%%%%%%%%%%%%%%%%%%%%%%%%%%%%%%%%%%%%%%%%%
\section{The Model Problem and Notation}  \label{sec:formulation}
%%%%%%%%%%%%%%%%%%%%%%%%%%%%%%%%%%%%%%%%%%%%%%%%%%%%%%%%%%%%%%%%%%%%%%%%%%%%%%%%

Let $\Omega$ be a bounded Lipschitz domain in $\mathbb{R}^d$. We introduce the model advection-diffusion-reaction (ADR) problem
\begin{align}
  -\epsilon \Delta u + \vecb(\vecx) \cdot \nabla u + c(\vecx)u  = f(\vecx)& \qquad \text{for~} \vecx \in \Omega \subset \Reals^d ,\label{ADReqn}\\
  u  = 0  \!\!\!\qquad&\qquad \mathrm{on}~ \pO \label{ADReqnbc}
\end{align}
with constant diffusion coefficient $0<\epsilon \le \epsilon_\mathrm{max}$, $\vecb \in [W^{1,\infty}(\Omega)]^d$, $c \in \SL{\infty}{\Omega}$ and $f\in L^2(\Omega)$. For $0 < \epsilon \ll 1$ the solution to this problem typically exhibits boundary or interior layers.

Unless otherwise stated, we define $C$ throughout as a positive constant, independent of $\epsilon$ and the finite element approximation space and which may be redefined from line to line. $C$ may depend on $\epsilon_\mathrm{max}$. By $a \lesssim b$ we mean $a \le Cb$. 

We consider $\Th$ to be subdivision of $\Omega$ into non-overlapping shape regular simplices or hyper-cubes $E$, which we shall refer to as the triangulation. Denote by $\Eh$ the union of edges $e$ (or faces for $d\ge 3$) of the mesh and the union of internal edges by $\Eho$. Define $\Gamma$ as the union of boundary edges, i.e., those lying in $\pO$. The diameter of an element $E \in \Th$ is denoted $h_E$ and $h = \max_{\EinTh} h_E$. We also denote the mesh function by $h_E$, thus for $\vecx \in E$ we let $h_E(\vecx)$ be equal to the diameter of $E$. We only consider meshes where $h\le1$. Define $h_e := \min(h_{E^-}, h_{E^+})$ for $e \in \Eho$ with $e = \bar{E}^+ \cap \bar{E}^-$ for $E^+$, $E^- \in \Th$. The mesh is shape regular so there exists $C>0$ such that for all $E$ we have $h_e \le \half (h_{E^+} + h_{E^-} ) \le C h_e$, and $C\ge1$ such that $C^{-1}h_{E^-} \le h_{E^+} \le C h_{E^-}$.

We define by \emph{$\Omega$-decomposition} the splitting of $\Omega$ into two regions $\OC$ and $\OD$ such that for the closure $\overline{\Omega} = \overline{\OC \cup \OD}$, and we define by \emph{$\Th$-decomposition} the splitting of $\Th$ into two sub-meshes $\TC$ and $\TD$ such that $\TC \subset \OC$ and $\TD := \Th \setminus \TC$. By abuse of language, we denoted here by $\TC$ not just the sub-mesh but also the region it occupies. Define $\GC$ (resp.~$\GD$) to be the intersection of $\Gamma$ with $\overline{\T}_\cG$ (resp.~$\overline{\T}_\dG$). Define $J := \overline{\T}_\cG\cap\overline{\T}_\dG$ and by convention we say that the edges lying in $J$ are only part of the discontinuous Galerkin skeleton $\EhD$, the union of faces in $\overline{\T}_\dG$, and not part of the continuous Galerkin skeleton defined by $\EhC := \Eh \setminus \EhD$.

In Figure \ref{fig:cdGdecomposition} we illustrate a splitting for a problem where $\Omega=(0,1)^2$ and the solution exhibits layers at $x=1$ and $y=1$. The $\Omega$-decomposition is labelled, with the demarcation between the $\OC$ and $\OD$ regions given by a dashed line. A $\Th$-decomposition is shown with the $\TD$ region shaded and the edges in $J$ marked with a heavy line.

\begin{figure}[tb]
  \centering
  \includegraphics[width=0.50\textwidth]{./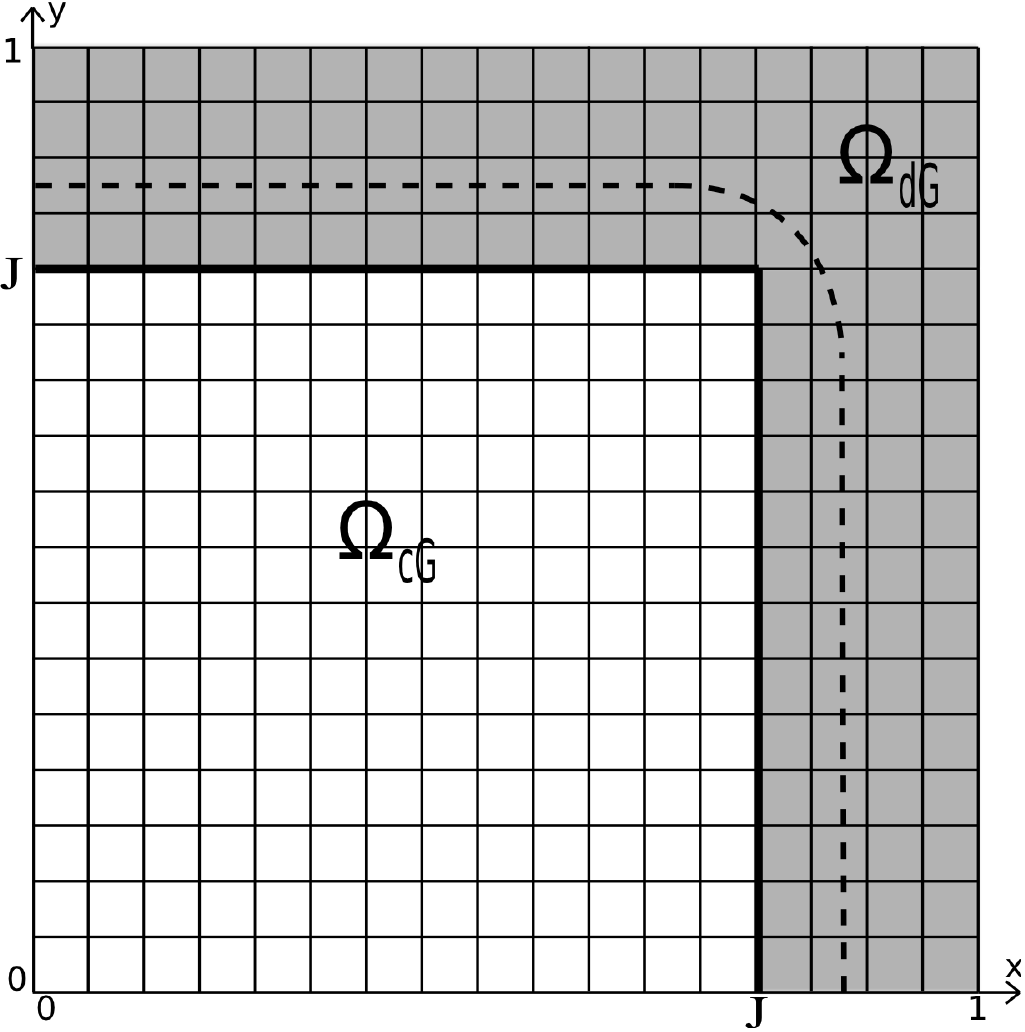}
  \caption{An example of a cdG decomposition.}
  \label{fig:cdGdecomposition}
\end{figure}

Given a generic scalar field $\nu: \Omega \to \Reals$, that may be discontinuous across an edge $e = \bar{E}^+ \cap \bar{E}^-$ for $E^+$, $E^- \in \Th$, we set $\nu^\pm := \nu|_{E^\pm}$, the interior trace on $E^\pm$ and similarly define $\vectau^\pm = \vectau|_{E^\pm}$ for a generic vector field $\vectau : \Omega \to \Reals^d$. Define the average and jump for a generic scalar as
\begin{equation*}
	\Average{\nu} := \frac{1}{2} (\nu^+ + \nu^-), \qquad \Jump{\nu} := \nu^+ n^+ + \nu^- n^-, \qquad \mathrm{on}~ e \in \Eho, 
\end{equation*}
and for a generic vector field as
\begin{equation*}
	\Average{\vectau} := \frac{1}{2} (\vectau^+ + \vectau^-), \qquad \Jump{\vectau} := \vectau^+ \cdot n^+ + \vectau^- \cdot n^-, \qquad \mathrm{on}~ e \in \Eho,
\end{equation*}
where $n^\pm$ is the unit outward pointing normal from $E^\pm$ on $e$. For $e \in \Gamma$ the definitions become
\begin{equation*}
	\average{\nu} := \nu, \qquad \jump{\nu} := \nu n, \qquad \average{\vectau} := \vectau, \qquad \mathrm{on}~ e \in \Gamma. 
\end{equation*}

We assume that the sign of $b \cdot n$ is the same for every $\vecx \in e$. Given a vector $b$ denote the inflow and outflow boundaries of $\Omega$ by
\begin{align*}
		\Gin := \set{\vecx \in \pO : b \cdot n \le 0}, \qquad \Gout :=  \set{\vecx \in \pO : b \cdot n > 0} 
\end{align*}
and for an element
\begin{align*}
		\bin E := \set{\vecx \in \pE : b \cdot n \le 0}, \qquad \bout E := \set{\vecx \in \pE : b \cdot n > 0}.
\end{align*}
On an edge $e$, we denote by $\nu^\insymbol$ the trace of a function $\nu$, taken from the element which contains $e$ in its inflow boundary. 
\begin{definition}
The cG space is defined by
\begin{equation*} \label{cGSpace}
  \VcG{} := \set{ v \in \SH{1}{\Omega} : \forall \EinTh, v|_E \in \Poly^k} 
\end{equation*}
and the dG space by
\begin{equation} \label{dGSpace}
  \VdG{} := \set{ v \in \SL{2}{\Omega} : \forall \EinTh, v|_E \in \Poly^k}   
\end{equation}
where $\Poly^{k}$ is the space of polynomials of either total (or on quadrilateral meshes partial) degree at most $k$ supported on $E$.
\end{definition}

We seek a finite element space $\VcdG{}$ that lies between the cG and dG spaces in the sense that
\begin{equation} \label{VcdGembedding}
  \VcG{} \subset \VcdG{} \subset \VdG{}.
\end{equation}
We construct this space by using continuous shape functions away from any boundary or internal layers present in the solution to \eqref{ADReqn} and discontinuous functions in the region of such layers. 
\begin{definition}
The cdG space is defined by
  \begin{equation} \label{cdGSpace}
  \begin{split}  
    \VcdG{} &:= \set{ v \in \SL{2}{\Omega} : \forall \EinTh, v|_E \in \Poly^k; v|_{\Gamma_\cG} = 0; v|_{\TC} \in C^0(\overline{\T}_{\!\cG})}.
  \end{split}
  \end{equation}
\end{definition}
Throughout we use the same  polynomial degree $k$ for $\VcG{}$, $\VdG{}$ and $\VcdG{}$.

Let $\chi$ be the characteristic function on $\TD$, i.e., that defined by
\begin{equation}\label{chidG}
  \chi := \left\{ \begin{array}{ll} 1 & \vecx \in \TD,\\ 0 & \vecx \in \TC. \end{array} \right.
\end{equation}
Then define
\begin{equation*} \label{cdGSpace-D}
  \VcdG{}(\TD) := \set{\chi v : v \in \VcdG{}}
\end{equation*}
and
\begin{equation*} \label{cdGSpace-C}
  \VcdG{}(\TC) := \set{(1-\chi) v : v \in \VcdG{}}.
\end{equation*}

In order to ensure a unique solution we make the following assumption.
\begin{assumption} \label{ass:ADRrho}
   We assume
  \begin{equation} \label{ADRrho}
    \cb(\vecx) := c(\vecx) - \half \div \vecb(\vecx) \ge \rho > 0 \qquad \forall \vecx \in \Omega,
  \end{equation}
   for some $\rho \in \Reals$.
\end{assumption}

\begin{definition} \label{def:Peclet}
We define the \emph{local mesh P\'{e}clet number} to be $\norm{\vecb}_\SL{\infty}{E} h_E / (2\epsilon)$, see \cite{RST08}.
\end{definition}

We consider meshes in the pre-asymptotic regime by making the following assumption.
\begin{assumption} \label{ass:Peclet}
 We assume that for $\epsilon = \epsilon_\mathrm{max}$ and every $\EinTh$ the local mesh P\'eclet number is greater than $\sqrt{h_E}$. Moreover, we require $\max \{\norm{h_E / b}_\SL{\infty}{\TD},h_E\} \le 1$.
\end{assumption}
As a consequence we have
\begin{equation} \label{Peclet1}
  \epsilon \le \epsilon_\mathrm{max} < \half \min_{\EinTh} h_E^{3/2} \norm{\vecb}_\SL{\infty}{\Omega}.
\end{equation}
This assumption, for a fixed $\vecb$, restricts the refinement of the triangulation for a given $\epsilon$. If we allowed $h \to 0$ for fixed $\epsilon > 0$ any layers would be resolved by the mesh and in the limit we would not see the non-physical oscillations associated with the cG approximation. We return to this question in Remark \ref{rem:cea}.

%In order to exclude small, oscillatory advection fields we make the assumption.
%\begin{assumption} \label{ass:divb}
%   We assume that there exists $C_b > 0$ such that for all $\vecx \in \Omega$ we have \[\abs{\vecb(\vecx)} \ge C_b \norm{\vecb}_\SW{1}{\infty}{\Omega}.\]
%\end{assumption}

To characterise admissible $\Omega$-decompositions of the mesh we introduce the \emph{reduced} problem:
\begin{equation} \label{ADRreduced}
  \begin{split}
    \vecb \cdot \nabla u_0 + c u_0 &=f \quad \mathrm{on}~\Omega,\\
    u_0 &=0 \quad \mathrm{on}~\Gin.
  \end{split}
\end{equation}
Further, we define $u_\epsilon := u - u_0$, where $u$ is the solution to the ADR problem \eqref{ADReqn}.

The $\Omega$-decomposition is chosen such that $u_\epsilon$ and $u_0$ have additional regularity on $\OC$. In general we do not expect that $u_0 \in \SH{2}{\Omega}$, even if we place higher regularity requirements on $f$, see, e.g., \cite{BR82} and the references therein. 
\begin{assumption} \label{def:OC}
The set $\OC \subset \Omega$ is chosen such that $u_0 \in \SH{2}{\OC}$ and $\norm{u_\epsilon}_\SH{2}{\OC}$ is bounded uniformly in $\epsilon$, that is for every $0 < \epsilon \le \epsilon_\mathrm{max}$ 
  \begin{equation} \label{ass_ueps}
    \norm{u_\epsilon}_\SH{2}{\OC} \lesssim 1.
  \end{equation}
\end{assumption}

%%%%%%%%%%%%%%%%%%%%%%%%%%%%%%%%%%%%%%%%%%%%%%%%%%%%%%%%%%%%%%%%%%%%%%%%%%%%%%%%
\subsection{Decoupled and Standard Formulations}
%%%%%%%%%%%%%%%%%%%%%%%%%%%%%%%%%%%%%%%%%%%%%%%%%%%%%%%%%%%%%%%%%%%%%%%%%%%%%%%%

We define by $\nablah$ the elementwise gradient operator. We discretize the advection term by
\begin{equation} \label{ADRBa}
  \Bform_a(w,\what) := \sum_\EinTh \int_E (\vecb \cdot \nablah w) \what \dx - \sum_\einEho \int_e \vecb \cdot \jump{w} \, \what^\insymbol \ds - \sum_{e \in \Gin} \int_e (\bdotn) w \, \what \ds,
\end{equation}
and the reaction term by
\begin{equation} \label{ADRBr}
  \Bform_r(w,\what) := \sum_\EinTh \int_E c w \what \dx.
\end{equation}
The advection and reaction parts will frequently occur together and so for brevity we also define $\Bform_{ar}(w,\what) := \Bform_a(w,\what)+\Bform_r(w,\what)$.

For the diffusion term, besides the standard symmetric interior penalty method we also present a modified scheme, which we call the {\em decoupled} method. We refer to \cite{ABCM01} for a comprehensive study of the interior penalty method and alternative discretizations. For $w,\what \in \VdG{}$ the decoupled method is defined by
\begin{equation} \label{ADRBdtilde}
  \begin{split}
		\Btilde{d}(w,\what) &:=  \sum_\EinTh \int_E \nablah w \cdot \nablah \what  \dx \\
			& \qquad +\sum_{\einEh\setminus J} \int_e \sigma h_e^{-1} \jump{w} \cdot \jump{\what} - \left( \average{\nablah w}\cdot \jump{\what} + \average{\nablah \what}\cdot \jump{w}\right) \ds, 
  \end{split}
\end{equation}
yielding
\begin{equation} \label{ADRBepstilde}
  \Btilde{\epsilon} (w,\what) := \epsilon \Btilde{d}(w,\what) + \Bform_a(w,\what) + \Bform_r(w,\what).
\end{equation}
Here $\sigma>0$ is a discontinuity penalization parameter. 
With this formulation there is no control on the fluxes across $J$ (hence the name decoupled). 
%We now introduce the \emph{weighted} bilinear form. On the interface $J$ we make two changes which reduce instead of removing the coupling between the regions. Firstly the average of the trace of the gradients is weighted entirely to the  cG side of $J$. With this modification we will not need to consider the upstream gradient on $J$. Secondly in the penalty term over $J$ the dependence on $h$ is removed. This will allow us at a later point in the analysis to divide by $h$ and still control the jump term. The discretization for the diffusion term is therefore given by
%\begin{equation} \label{ADRBdbar}
%  \begin{split}
%		\Bbar{d}(w,\what) &:= \Btilde{d}(w,\what) + \sum_{e \in J} \int_e \sigma \jump{w} \cdot \jump{\what} - \left( \nablah w|_\TC\cdot \jump{\what} + \nablah \what|_\TC \cdot \jump{w}\right) \ds.
%  \end{split}
%\end{equation}
%We define the weighted bilinear form by
%\begin{equation} \label{ADRBepsbar}
%  \Bbar{\epsilon} (\what,w) := \epsilon \Bbar{d}(\what,w) + \Bform_a(\what,w) + \Bform_r(\what,w).
%\end{equation}
We recall that the standard symmetric Interior Penalty (IP) method is given by
\begin{equation*}
  \begin{split}
		\Bform_d(w,\what) &:= \Btilde{d}(w,\what) + \sum_{e \in J} \int_e \sigma h_e^{-1} \jump{w} \cdot \jump{\what} - \left( \average{\nablah w} \cdot \jump{\what} + \average{\nablah \what} \cdot \jump{w}\right) \ds,
  \end{split}
\end{equation*}
and
\begin{equation*}
  \Bform_\epsilon (\what,w) := \epsilon \Bform_d(\what,w) + \Bform_a(\what,w) + \Bform_r(\what,w).
\end{equation*}
When restricted to the cdG space, the decoupled and standard IP forms become the bilinear form for the standard cG method on the continuous region. We first first analyse the stability properties of the decoupled formulation and infer stability for the standard IP method from a perturbation argument.

We introduce the following mesh dependent norm for $w \in \VdG{}$:
\begin{equation} \label{ADRtriple}
\begin{split}
  \triple{w}^2 &:= \norm{w}_d^2 + \norm{w}_{ar}^2,
\end{split}
\end{equation}
where
\begin{equation*}
  \norm{w}_d^2 := \sum_\EinTh \epsilon \abs{w}_\SH{1}{E}^2 + \sum_{\einEh} \epsilon\sigma h_e^{-1} \norm{\jump{w}}^2_\SL{2}{e},% + \sum_{e \in J} \epsilon\sigma \norm{\jump{w}}^2_\SL{2}{e}
\end{equation*}
and
\begin{equation*}
  \norm{w}^2_{ar} = \norm{\cbhalf w}^2_\SL{2}{\Omega} + \sum_\einEh \half\norm{\abs{\bdotn}^{\fhalf} \jump{w}}^2_\SL{2}{e},
\end{equation*}
with $\cb$ defined in \eqref{ADRrho}.

Recall that for the symmetric interior penalty method the parameter $\sigma$ is selected independently of the $\Th$ such that $\Bform_d$ is positive definite with a coercivity constant which is also independent of $\Th$.%\begin{assumption}\label{ass:sigma}
We assume that $\sigma$ is such that for all $w \in \VdG{}$:
\begin{equation} \label{eqn:sigma}
\norm{\average{\nablah w}\cdot \jump{w}}_\SL{1}{\Eh} \le \half \norm{\nablah w}_\SL{2}{\Omega} \norm{\sqrt{\sigma / h_e} \jump{w}}_\SL{2}{\Eh}.
\end{equation}
%\end{assumption}
Then, by Young's inequality, we have
\begin{equation*}
\half \norm{w}_d^2 \le \Bform_d(w,w).
\end{equation*}
We adopt for $\Btilde{d}$ the same $\sigma$ as for $\Bform_d$.

We introduce a projection operator following the presentation of \cite{AM09} which allows us to consider non-constant $\vecb$. For polynomial degree $k \ge 0$ consider the $L^2$-orthogonal projection $\Proj{}{\D}:\SL{2}{\Omega} \to \VcdG{}(\TD)$ defined by
\begin{equation} \label{ProjD}
	\int_\Omega \Proj{}{\D}(v) w \dx = \int_\Omega v w  \dx \qquad \forall w \in \VcdG{}(\TD).
\end{equation}
In particular $\Proj{}{\D}(v)|_\TC = 0$. Furthermore, for all elements $E$ of the mesh
%the projection has the following properties: With $v \in \SL{2}{\Omega}$ and $\EinTh$ we have (also in the continuous Galerkin region)
%\begin{equation*} \label{DGL2-projection}
%	\int_E \left( \Proj{}{\D}(v) - v \right) q \dx = 0 \qquad \forall q \in \Poly_{k}
%\end{equation*}
%and
\begin{equation} \label{DGL2-projection-norm}
	\norm{\Proj{}{\D}(v)}_\SL{2}{E} \le \norm{v}_\SL{2}{E} \qquad \forall v\in \SL{2}{E}.
\end{equation}

As $\Proj{}{\D}(v) \in \VcdG{}(\TD)$ we have for all $\EinTh$ the inverse inequality
\begin{align} \label{infsupInverse}
	\abs{\Proj{}{\D} (\vecb \cdot \nablah v)}_\SH{1}{E} &\lesssim h_E^{-1} \norm{\Proj{}{\D} (\vecb \cdot \nablah v)}_\SL{2}{E},
\end{align}
and using a trace inequality we have
\begin{equation} \label{infsupJump}
	\sum_{\einEh}  \norm{\jump{\Proj{}{\D} (\vecb \cdot \nablah v)}}^2_\SL{2}{e} \lesssim \sum_{\EinTh}  h_E^{-1} \norm{\Proj{}{\D}(\vecb \cdot \nablah v)}_\SL{2}{E}^2.
\end{equation}

Define the \emph{streamline} norm by
\begin{equation} \label{RestrictedSDGnorm}
  \norm{v}^2_\SDG := \triple{v}^2 + \sum_{\EinTh} \tau_E \norm{\Proj{}{\D}(\vecb\cdot\nablah v)}^2_\SL{2}{E},
\end{equation}
where $\tau_E$ is defined by
\begin{equation} \label{deftauCDG}
	\tau_E := \displaystyle\tau \min \left\{ \frac{h_E}{\norm{b}_\SL{\infty}{E}}, \frac{h_E^2}{\epsilon} \right\},
\end{equation}
and $\tau$ is a positive number at our disposal. 

\begin{definition} \label{def:vhtilde}
A decoupled cdG approximation to \eqref{ADReqn} is defined as $\vhtilde \in \VcdG{}$ satisfying
  \begin{equation} \label{decoupledcdG}
    \Btilde{\epsilon} (\vhtilde,v)  = \int_\Omega fv \dx \qquad \forall v \in \VcdG{}.
  \end{equation}
\end{definition}

\begin{definition} \label{def:vh}
A cdG approximation to \eqref{ADReqn} is defined as $\vh \in \VcdG{}$ satisfying
  \begin{equation}
    \Beps (\vh,v)  = \int_\Omega fv \dx \qquad \forall v \in \VcdG{}.
  \end{equation}
\end{definition}

We require that $b$ points on $J$ non-characteristically from $\TC$ to $\TD$.

\begin{assumption}\label{ass:binterface}
The $\Th$-decomposition is such that for every $e \in J$
\begin{equation} \label{binterface}
   \frac{1}{4} (\vecb(\vecx) \cdot n^\C)|_e > \epsilon_\mathrm{max} \frac{\sigma}{h_e^{3/2}} \qquad \forall \vecx \in e,
\end{equation}
where $n^\C$ represents the unit normal pointing from $\TC$ to $\TD$.
\end{assumption}

Observe that the scaling between $\epsilon$ and $h$ mirrors that of Assumption \ref{ass:Peclet}.

\begin{theorem} \label{thm:Bcoercivity}
On $\VdG{}$ the bilinear forms $\Btilde{\epsilon}$ and $\Beps$ are coercive with respect to $\triple{w}$:
  \begin{equation} \label{Bcoercivity}
    \frac{1}{4} \triple{w}^2 \le \Btilde{\epsilon}(w,w), \qquad \frac{1}{4} \triple{w}^2 \le \Beps(w,w), \qquad w \in \VdG{}.
  \end{equation}
\end{theorem}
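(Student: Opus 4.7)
The plan is to treat the two forms in parallel, isolating the advection--reaction contribution, which behaves identically in both, and then handling the diffusion part, where the decoupled form $\Btilde{\epsilon}$ is the more delicate case owing to the absence of penalty and consistency terms on $J$.

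First I would verify the standard identity $\Bform_{ar}(w,w) = \norm{w}^2_{ar}$ by elementwise integration by parts of the volume advection term via $2(\vecb\cdot\nablah w)w = \vecb\cdot\nablah(w^2)$, telescoping the element-boundary contributions with the upwind jump terms in \eqref{ADRBa}, and combining with $\Bform_r$ and Assumption \ref{ass:ADRrho}; this produces the $\cbhalf$-weighted volume term and the $|\bdotn|^\fhalf$-weighted jump sum. For the symmetric interior penalty form the theorem is then immediate: the assumption \eqref{eqn:sigma} together with Young's inequality gives $\Bform_d(w,w) \ge \half \norm{w}_d^2$, as already recorded in the text, hence
\begin{equation*}
  \Beps(w,w) \;\ge\; \tfrac{1}{2}\norm{w}_d^2 + \norm{w}_{ar}^2 \;\ge\; \tfrac{1}{4}\triple{w}^2.
\end{equation*}

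The decoupled case requires more care. Since $\Btilde{d}$ retains the consistency and penalty terms only on $\Eh\setminus J$, applying \eqref{eqn:sigma}---whose right-hand side is stated with the full $\Eh$-sum of the jump---and then Young's inequality on the surviving cross terms yields
\begin{equation*}
  \epsilon\,\Btilde{d}(w,w) \;\ge\; \tfrac{1}{2}\norm{w}_d^2 \;-\; \epsilon \sum_{e \in J} \sigma h_e^{-1} \norm{\jump{w}}^2_{\SL{2}{e}},
\end{equation*}
the negative remainder arising precisely because the $\SL{1}{\Eh}$ bound on the cross terms sees the penalty on $J$ even though $\Btilde{d}$ itself does not.

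The crucial step is to absorb this unwanted $J$-contribution into $\norm{w}_{ar}^2$, and this is exactly what Assumption \ref{ass:binterface} is designed for. On $e \in J$ the field $\vecb$ points from $\TC$ to $\TD$ so $|\bdotn| = \vecb\cdot n^\C$, and combining $\epsilon \le \epsilon_\mathrm{max}$ with $h_e \le 1$ (hence $h_e^{-1} \le h_e^{-3/2}$) the inequality \eqref{binterface} yields $\epsilon\sigma h_e^{-1} \le \tfrac{1}{4}|\bdotn|$ pointwise on $J$. Therefore
\begin{equation*}
  \epsilon \sum_{e\in J} \sigma h_e^{-1} \norm{\jump{w}}^2_{\SL{2}{e}} \;\le\; \tfrac{1}{4}\sum_{e\in J}\norm{|\bdotn|^\fhalf \jump{w}}^2_{\SL{2}{e}} \;\le\; \tfrac{1}{2}\norm{w}_{ar}^2,
\end{equation*}
the last step using the factor $\tfrac{1}{2}$ in front of the jump sum in the definition of $\norm{w}_{ar}^2$. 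Assembling the pieces gives $\Btilde{\epsilon}(w,w) \ge \tfrac{1}{2}\norm{w}_d^2 + \tfrac{1}{2}\norm{w}_{ar}^2 \ge \tfrac{1}{4}\triple{w}^2$. The main obstacle is exactly this absorption: without the non-characteristic interface condition the missing diffusion penalty on $J$ could not be compensated by the convective jump, and coercivity uniform in $\epsilon$ would fail for $\Btilde{\epsilon}$.
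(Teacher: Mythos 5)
Your argument is correct and follows the paper's own proof: the identity $\Bform_{ar}(w,w)=\norm{w}_{ar}^2$ by elementwise integration by parts, the standard coercivity of $\Bform_d$ via \eqref{eqn:sigma} and Young's inequality, and for the decoupled form the re-insertion of the missing $J$-penalty followed by its absorption into the convective jump term using Assumption \ref{ass:binterface} together with $h_e\le 1$. You merely make explicit the final absorption step that the paper compresses into ``the result now follows with Assumption \ref{ass:binterface}''.
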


\begin{proof}
For the advection and reaction terms using integration by parts we have
\begin{equation}\label{forJterms}
  \Bform_{ar}(w,w) = \norm{\cbhalf w}^2_\SL{2}{\Omega} + \sum_\einEh \int_e \half \abs{\bdotn}\jump{w}\cdot \jump{w} \ds.
\end{equation}
For the diffusion term it follows from Equation \ref{eqn:sigma} and Young's inequality that
\begin{align*}
\Btilde{d}(w,w) + \int_J \frac{\sigma}{h_e} \jump{w} \cdot \jump{w} \ds \ge \half \| w \|_d^2, \qquad \Bform_d(w,w) \ge \half \| w \|_d^2.
\end{align*}
Combing the last inequality with \eqref{forJterms}, the result now follows with Assumption \ref{ass:binterface}.
\end{proof}

It follows that $\vhtilde$ and $\vh$ exist and are unique. The following final assumption permits the use of an inverse inequality on the continuous Galerkin region. 
\begin{assumption} \label{ass:quasi}
The mesh $\TC$ is quasi-uniform.
\end{assumption}
It is convenient to denote the mesh-size on $\TC$ by $h_\TC = \| h_E \|_{L^\infty(\TC)}$.

The main result of this work is Theorem \ref{thm:vhstab}, which states that the cdG approximation is stable in the streamline diffusion norm whenever Assumptions \ref{ass:ADRrho}, \ref{ass:Peclet}, 
\ref{def:OC}, 
%\ref{ass:sigma}, 
\ref{ass:binterface} and \ref{ass:quasi} are satisfied. 

In order to prove this result we establish first two separate stability bounds: We define $\vepstilde, \votilde \in \VcdG{}$ by the condition that for all $v\in\VcdG{}$
\begin{align}
  \Btilde{\epsilon}(\vepstilde,v) &= \Btilde{\epsilon}(u_\epsilon,v), \label{vepstilde}\\
  \Btilde{\epsilon}(\votilde,v) &= \Btilde{\epsilon}(u_0,v). \label{votilde}
\end{align}
Observe that by linearity of the decoupled cdG method we have $\vepstilde + \votilde = \vhtilde$. In Section \ref{sec:epscontrol} we derive a bound for the decoupled cdG approximation $\vepstilde$ to $u_\epsilon$ on $\TC$; in Section \ref{sec:hypcontrol} we obtain a bound for the decoupled cdG approximation $\votilde$ to $u_0$ on $\TC$. In Section \ref{sec:infsup} we establish an inf-sup condition with streamline control on $\TD$. Finally, in Section \ref{sec:combined} we combine these results to show stability of the decoupled and then of the standard cdG approximation.

%%%%%%%%%%%%%%%%%%%%%%%%%%%%%%%%%%%%%%%%%%%%%%%%%%%%%%%%%%%%%%%%%%%%%%%%%%%%%%%%
%%%%%%%%%%%%%%%%%%%%%%%%%%%%%%%%%%%%%%%%%%%%%%%%%%%%%%%%%%%%%%%%%%%%%%%%%%%%%%%%
%
%%%%%%%%%%%%%%%%%%%%%%%%%%%%%%%%%%%%%%%%%%%%%%%%%%%%%%%%%%%%%%%%%%%%%%%%%%%%%%%%
\section{Bounds on the $\vepstilde$ Component on $\TC$}  \label{sec:epscontrol}
%%%%%%%%%%%%%%%%%%%%%%%%%%%%%%%%%%%%%%%%%%%%%%%%%%%%%%%%%%%%%%%%%%%%%%%%%%%%%%%%

We introduce the projection operator of Scott and Zhang, \cite{SZ90} \cite[Section 1.6.2]{EG04}. 
\begin{lemma}[Scott-Zhang Projection] \label{thm:SZ}
The Scott-Zhang operator $\mathcal{S\!Z}_h : \SW{l}{p}{\Omega} \to \VcG{}$ is a mapping with the following properties: For $l > \half$ there exists a $\Csz>0$ such that for all $0\le m\le\min(1,l)$
  \begin{equation} \label{SZ1}
    \norm{\SZ{v}}_\SH{m}{\TC} \le \Csz \norm{v}_\SH{l}{\TC}\qquad \forall v \in \SH{l}{\TC},
  \end{equation}
  and provided $l\le k+1$ for all $E \in \TC$ and $0\le m \le l$ we have the approximation
  \begin{equation} \label{SZ2}
    \norm{v-\SZ{v}}_\SH{m}{E} \le \Csz h_E^{l-m} \abs{v}_\SH{l}{\patchE} \qquad \forall v \in \SH{l}{\patchE}.
  \end{equation}
  where $\patchE$ is the node patch of $E$, i.e., the set of cells in $\TC$ sharing at least one vertex with~$E$.
\end{lemma}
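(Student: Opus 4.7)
The plan is to follow the original Scott--Zhang construction. For each Lagrange node $x_i$ of $\VcG{}$ with associated basis function $\phi_i$, I would select an appropriate subsimplex $\sigma_i$ of the mesh containing $x_i$: if $x_i \in \pO$ then $\sigma_i$ is chosen to lie in $\pO$ (this is the crucial choice for preserving the homogeneous boundary condition encoded in $\VcG{}$), otherwise $\sigma_i$ is any $(d{-}1)$-subsimplex with vertex $x_i$. Let $\{\psi_i\}$ denote the $L^2(\sigma_i)$-dual basis to the restrictions of the Lagrange basis, so that $\int_{\sigma_i} \psi_i \phi_j \, ds = \delta_{ij}$, and define $\mathcal{S\!Z}_h v := \sum_i \phi_i \int_{\sigma_i} \psi_i v \, ds$. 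The integrals are meaningful for $v \in \SH{l}{\Omega}$ with $l > \fhalf$ by the trace theorem, and by construction $\mathcal{S\!Z}_h$ reproduces polynomials of degree $\le k$ locally on each patch $\patchE$.

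For the stability bound \eqref{SZ1}, I would argue element by element. On a fixed $E \in \TC$ only the nodes in $\overline{E}$ contribute to $\SZ{v}|_E$, and each contribution depends on $v|_{\sigma_i}$ with $\sigma_i \subset \patchE$. A scaling argument from a reference configuration, controlled uniformly by shape regularity of $\TC$, yields $\norm{\psi_i}_{\SL{\infty}{\sigma_i}} \lesssim h_E^{-(d-1)}$ and $\norm{\phi_i}_{\SH{m}{E}} \lesssim h_E^{d/2 - m}$. Together with the multiplicative trace inequality on $\patchE$ this gives $\norm{\SZ{v}}_{\SH{m}{E}} \lesssim \norm{v}_{\SH{l}{\patchE}}$ for $0 \le m \le \min(1,l)$. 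Summing over elements and using the bounded overlap of the patches $\patchE$ (finite by shape regularity) delivers the global estimate.

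For the approximation bound \eqref{SZ2}, I would combine polynomial preservation with a Bramble--Hilbert argument. Since $\mathcal{S\!Z}_h$ leaves $q \in \Poly^k$ invariant locally on $\patchE$, for any such $q$ we have
\[
  \norm{v - \SZ{v}}_\SH{m}{E} \le \norm{v-q}_\SH{m}{E} + \norm{\SZ{v-q}}_\SH{m}{E}.
\]
The second term is bounded by the local stability just established, giving a $\norm{v-q}_\SH{l}{\patchE}$ contribution, while the first is standard. Choosing $q$ to be the averaged Taylor polynomial of degree $\lfloor l \rfloor - 1$ on $\patchE$ (permissible since $l \le k+1$) and invoking the classical Bramble--Hilbert estimate $\norm{v-q}_\SH{s}{\patchE} \lesssim h_E^{l-s} \abs{v}_\SH{l}{\patchE}$ for $0 \le s \le l$, the two contributions combine to yield the claimed $h_E^{l-m} \abs{v}_\SH{l}{\patchE}$ bound.

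The main obstacle is bookkeeping: one must carefully track the scaling exponents uniformly in the range of $m$ and $l$, and verify that for boundary nodes the chosen $\sigma_i \subset \pO$ has $(d{-}1)$-measure comparable to $h_E^{d-1}$ whenever $x_i$ is adjacent to $E$, so that the dual-basis scaling is not degraded. Since this result is the classical Scott--Zhang lemma, an alternative and much shorter route is simply to cite \cite{SZ90} or \cite[Section~1.6.2]{EG04}.
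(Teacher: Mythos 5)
The paper offers no proof of this lemma at all --- it is stated as a citation of Scott--Zhang \cite{SZ90} and \cite[Section 1.6.2]{EG04} --- and your sketch is precisely the standard argument from those references (dual basis on well-chosen subsimplices, scaling plus trace for stability, polynomial invariance plus Bramble--Hilbert for approximation), so it is correct and takes the same route as the source the paper relies on. Your closing remark that the shortest admissible proof here is simply the citation is exactly what the authors do.
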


\begin{theorem} \label{thm:vepstilde}
The decoupled cdG approximation $\vepstilde$ is stable on the $\TC$ region in the sense that
  \begin{equation} \label{vepstildestab}
    \| \vepstilde \|_{H^1(\TC)} \lesssim 1.
  \end{equation}
\end{theorem}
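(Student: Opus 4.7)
The strategy is to compare $\vepstilde$ with the Scott--Zhang quasi-interpolant $\phi := \SZ{u_\epsilon}$, which by Lemma \ref{thm:SZ} and Assumption \ref{def:OC} satisfies $\|\phi\|_{H^1(\TC)} \lesssim \|u_\epsilon\|_{H^2(\OC)} \lesssim 1$. Because Assumption \ref{ass:binterface} forces $\vecb$ to point from $\TC$ to $\TD$ across $J$, the continuous region satisfies $\GC \subset \Gin$; hence $u_\epsilon = u - u_0$ vanishes on $\GC$ and we may arrange $\phi|_{\GC} = 0$. The test function will be $\psi \in \VcdG{}$ defined by $\psi|_{\TC} := \vepstilde - \phi$ and $\psi|_{\TD} := 0$, which is admissible thanks to the continuity of $\vepstilde - \phi$ on $\TC$ together with its vanishing trace on $\GC$.

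Galerkin orthogonality from \eqref{vepstilde} then yields $\Btilde{\epsilon}(\psi,\psi) = \Btilde{\epsilon}(\eta,\psi)$ with $\eta := u_\epsilon - \phi$ on $\TC$ and $\eta := 0$ on $\TD$. Because $\psi$ vanishes on $\TD$, is continuous on $\TC$, and $\Btilde{d}$ omits face terms on $J$, the coercivity computation of Theorem \ref{thm:Bcoercivity} collapses to
\[
\Btilde{\epsilon}(\psi,\psi) \ge \rho \|\psi\|_{L^2(\TC)}^2.
\]
For the right-hand side, the only surviving contributions are the volume integrals on $E \in \TC$ and the upwind advection flux on $J$. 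I would estimate these using: the Scott--Zhang bounds $\|\eta\|_{L^2(E)} \lesssim h_E^2$ and $|\eta|_{H^1(E)} \lesssim h_E$ from \eqref{SZ2} with $l = 2$; the scaled trace inequality, which combines these two into $\|\eta\|_{L^2(J)} \lesssim h^{3/2}$; the inverse/trace estimates $\|\nabla\psi\|_{L^2(\TC)} \lesssim h_\TC^{-1}\|\psi\|_{L^2(\TC)}$ and $\|\psi\|_{L^2(J)} \lesssim h_\TC^{-1/2}\|\psi\|_{L^2(\TC)}$, valid by Assumption \ref{ass:quasi}; and the pre-asymptotic scaling $\epsilon \lesssim h^{3/2}$ from \eqref{Peclet1}. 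Each term is then dominated by $h_\TC \|\psi\|_{L^2(\TC)}$, giving $|\Btilde{\epsilon}(\eta,\psi)| \lesssim h_\TC \|\psi\|_{L^2(\TC)}$ and hence $\|\psi\|_{L^2(\TC)} \lesssim h_\TC$.

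The inverse inequality on the quasi-uniform $\TC$ upgrades this to $\|\psi\|_{H^1(\TC)} \lesssim 1$; a triangle inequality with the stability \eqref{SZ1} of $\phi$ then delivers \eqref{vepstildestab}. The main obstacle is the flux term on $J$: a direct $H^1$-trace bound only yields $\|\eta\|_{L^2(J)} \lesssim h^{1/2}$, which combined with the inverse trace loss on $\psi$ produces an $O(1)$ bound and fails to close the inverse-inequality argument. Extracting the sharper $h^{3/2}$ scaling requires the full scaled trace inequality together with \emph{both} the $L^2$ and $H^1$ Scott--Zhang approximation estimates, and the closure of the argument also relies crucially on $\epsilon \lesssim h^{3/2}$ to absorb the diffusion contribution in $\Btilde{\epsilon}(\eta,\psi)$.
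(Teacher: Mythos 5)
Your proposal is essentially the paper's argument: compare $\vepstilde$ with the Scott--Zhang interpolant of $u_\epsilon$, test with a function supported on $\TC$, invoke Galerkin orthogonality and coercivity, and close with an inverse inequality on the quasi-uniform $\TC$. The one structural difference is that the paper introduces an auxiliary $\va$ equal to $\SZ{u_\epsilon}$ on $\TC$ and to a local dG solve on $\TD$ with data $\SZ{u_\epsilon}$ on $J$, and then shows the $\TD$ contribution drops out when testing with $\xi-\chi\xi$; you shortcut this by letting the test function $\psi$ vanish on $\TD$ from the outset. That shortcut is legitimate, precisely because the only coupling in $\Btilde{\epsilon}$ between a $\TD$-supported first argument and a $\TC$-supported second argument is the advective flux on $J$ (the decoupled diffusion form omits $J$), and that flux vanishes.

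However, you have mislocated the role of Assumption \ref{ass:binterface}, and the term you single out as ``the main obstacle'' is not there. The flux on $e\subset J$ pairs $\vecb\cdot\jump{\eta}$ with $\psi^{\insymbol}$; since $\vecb$ points from $\TC$ into $\TD$, the edge $e$ lies in the inflow boundary of the $\TD$-side element, so $\psi^{\insymbol}=\psi^{\D}=0$ and the term vanishes identically. This is exactly how the paper uses Assumption \ref{ass:binterface} in this proof. Your $h^{3/2}$ scaled-trace estimate is a (convergent) bound on a term that is zero, and the pre-asymptotic scaling $\epsilon\lesssim h^{3/2}$ is not needed here either: $\epsilon\le\epsilon_\mathrm{max}$ suffices, as the paper absorbs $\tfrac{\epsilon}{8}\abs{\xi}^2_{\SH{1}{\TC}}$ into $\triple{\xi}^2$ rather than estimating the diffusion term against $\norm{\psi}_\SL{2}{\TC}$ alone. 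Finally, your deduction that $\GC\subset\Gin$ ``because Assumption \ref{ass:binterface} forces $\vecb$ to point from $\TC$ to $\TD$'' is a non sequitur: that assumption constrains $\vecb$ only on the internal interface $J$, not on the outer boundary $\GC$. The admissibility of $\SZ{u_\epsilon}$ as part of an element of $\VcdG{}$ (its vanishing on $\GC$) is a point the paper also passes over silently, but it cannot be justified the way you do it.
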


\begin{proof}
We pick the \emph{auxiliary} solution $\va \in \VcdG{}$ as follows: On $\TC$, define $\va$ to be the Scott-Zhang projection of $u_\epsilon$; and on $\TD$ to be the dG approximation with boundary conditions given by $\SZ{u_\epsilon}$ on $J$ and $0$ on $\GD$, i.e.,
\begin{align*}
  \va &= \SZ{u_\epsilon} \quad\mathrm{on}~\TC,\\
  \Btilde{\epsilon}(\va,v) &= \Btilde{\epsilon} (u_\epsilon,v) \quad\forall v\in \VcdG{}(\TD).
\end{align*}
Set $\eta := u_\epsilon - \va$ and $\xi := \va - \vepstilde$, so $\eta + \xi = u_\epsilon - \vepstilde$. Notice that $\xi \in \VcdG{}$. The Galerkin orthogonality expressed by \eqref{vepstilde} and Theorem \ref{thm:Bcoercivity} give
\begin{equation} \label{xiorthog}
  {\textstyle \frac{1}{4}} \triple{\xi} \le \Btilde{\epsilon}(\xi,\xi) = -\Btilde{\epsilon}(\eta,\xi) = -\Btilde{\epsilon}(\eta,\xi-\chi\xi),
\end{equation}
where $\chi$ is defined in \eqref{chidG}. Note that $\xi - \chi\xi$ is continuous except on $J$ where $\jump{\xi-\chi\xi} = \xi^\C \cdot n^\C$ and $\average{\xi-\chi\xi} = \half \xi^\C$, where the superscript $\C$ indicates the trace taken from the continuous Galerkin side of $J$.

We examine each term of $\Btilde{\epsilon}$ in turn. For the diffusion parts we use Young's inequality
\begin{align*}
  -\Btilde{d}(\eta,\xi-\chi\xi) &\le 2 \abs{\eta}^2_\SH{1}{\TC} + {\textstyle \frac{1}{8}} \abs{\xi}^2_\SH{1}{\TC}.
\end{align*}
For the advection term we use Assumption \ref{ass:binterface} which ensures that flux terms on $J$ are zero as the upwind value of $\xi - \chi\xi$ vanishes. With Young's inequality we have
\begin{align*}
  -\Bform_{a}(\eta,\xi-\chi\xi) &\le \textstyle \frac{4}{\rho} \norm{b\cdot \nablah\eta}^2_\SL{2}{\TC} + \frac{\rho}{16}\norm{\xi}^2_\SL{2}{\TC},
\end{align*}
where $\rho$ is defined in \eqref{ADRrho}. Finally for the reaction term
\begin{align*}
  -\Bform_{r}(\eta,\xi-\chi\xi) &\le \textstyle \frac{4}{\rho} \norm{c}^2_\SL{\infty}{\Omega}\norm{\eta}^2_\SL{2}{\TC} + \frac{\rho}{16} \norm{\xi}^2_\SL{2}{\TC}.
\end{align*}
Using the previous three results, \eqref{xiorthog}, the definition of the norm \eqref{ADRtriple}, and Lemma \ref{thm:SZ} we gather $\xi$ terms on the left hand side to show, with $h_\TC = \| h_E \|_{L^\infty(\TC)}$,
\begin{align} \nonumber
  \textstyle \frac{1}{8} \triple{\xi}^2
  &\le \textstyle 2 \epsilon \abs{\eta}^2_\SH{1}{\TC} + \frac{4}{\rho}\norm{b\cdot \nablah\eta}^2_\SL{2}{\TC}+ \frac{4}{\rho} \norm{c}^2_\SL{\infty}{\Omega}\norm{\eta}^2_\SL{2}{\TC}\\ \label{vepstilde-bound}
  &\lesssim (\epsilon h_\TC^2 + h_\TC^2 + h_\TC^4) \norm{u_\epsilon}_\SH{2}{\OC}^2 \lesssim h_\TC^2
\end{align}
where in the final step we have used \eqref{ass_ueps}. As $\rho > 0$ we may use \eqref{vepstilde-bound} and an inverse inequality to show
\begin{equation} \label{vtildeepsinv}
  \norm{\xi}^2_\SH{1}{\TC} \lesssim h_\TC^{-2} \norm{\xi}^2_\SL{2}{\TC} \lesssim h_\TC^{-2} \triple{\xi}^2 \lesssim 1.
\end{equation}
Assumption \ref{def:OC} and \eqref{SZ1} give $\norm{\vepstilde}^2_\SH{1}{\TC} \lesssim 1$. 
\end{proof}

%%%%%%%%%%%%%%%%%%%%%%%%%%%%%%%%%%%%%%%%%%%%%%%%%%%%%%%%%%%%%%%%%%%%%%%%%%%%%%%%
%%%%%%%%%%%%%%%%%%%%%%%%%%%%%%%%%%%%%%%%%%%%%%%%%%%%%%%%%%%%%%%%%%%%%%%%%%%%%%%%
%
%%%%%%%%%%%%%%%%%%%%%%%%%%%%%%%%%%%%%%%%%%%%%%%%%%%%%%%%%%%%%%%%%%%%%%%%%%%%%%%%
\section{Bounds on the $\votilde$ Component on $\TC$}  \label{sec:hypcontrol}
%%%%%%%%%%%%%%%%%%%%%%%%%%%%%%%%%%%%%%%%%%%%%%%%%%%%%%%%%%%%%%%%%%%%%%%%%%%%%%%%

We now pick the auxiliary solution $\va$ as follows: On $\TD$ let it be $u_0$ and on $\TD$ be the dG approximation to $u_0$ with boundary conditions given by $u_0$ on $\GD \cup J$, i.e.,
\begin{align} \label{va_on_u0}
  \va = u_0 \qquad &\mathrm{on}~\TC,\\
  \Btilde{\epsilon}(\va,v) = \Btilde{\epsilon} (u_0,v) \qquad & \forall v\in \VcdG{}(\TD). \label{va_TD}
\end{align}

\begin{lemma} \label{thm:Bequivalenceva}
We have for all $v \in \VcdG{}$ that $\Btilde{\epsilon}(\va,v) = \Btilde{\epsilon}(\votilde,v)$.
\end{lemma}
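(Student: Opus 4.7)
The plan is to use the decoupled structure of $\Btilde{\epsilon}$ (no diffusion flux across $J$) together with the characteristic function $\chi$ from \eqref{chidG} to split any test function $v \in \VcdG{}$ into a piece supported on $\TD$ and a piece supported on $\TC$, and to handle the two pieces separately.

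\textbf{Decomposition step.} Write $v = v_\TD + v_\TC$ where $v_\TD := \chi v \in \VcdG{}(\TD)$ and $v_\TC := (1-\chi) v \in \VcdG{}(\TC)$. For the $\TD$ part, the defining condition \eqref{va_TD} gives $\Btilde{\epsilon}(\va, v_\TD) = \Btilde{\epsilon}(u_0, v_\TD)$ directly. Everything then reduces to showing $\Btilde{\epsilon}(\va - u_0, v_\TC) = 0$, and the point is that both factors are ``on disjoint supports'' in a form strong enough for each component of $\Btilde{\epsilon}$: $(\va - u_0)|_\TC \equiv 0$ by \eqref{va_on_u0}, while $v_\TC \equiv 0$ on $\TD$ and $v_\TC \in C^0(\overline{\TC})$ with $v_\TC|_{\GC} = 0$ by the definition of $\VcdG{}$.

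\textbf{Term-by-term verification.} Going through the three pieces of $\Btilde{\epsilon}(\va - u_0, v_\TC)$:
\begin{itemize}
\item \emph{Reaction} $\Bform_r$: trivial, since the integrand vanishes pointwise on $\Omega$.
\item \emph{Diffusion} $\Btilde{d}$: the volume integrals restrict to $\TC$ where $\va - u_0 = 0$. For the face contributions I use that, by the definition of $\Btilde{d}$ in \eqref{ADRBdtilde}, the sum is taken over $\Eh\setminus J$, so $J$ is excluded. On each remaining $e$, $v_\TC$ has no jump (it is continuous across $\EhC$-faces and identically zero on both sides of any $\EhD\setminus J$-face), and similarly $\jump{\va - u_0} = 0$ on $\EhC$ since $u_0$ is continuous on $\TC$ by Assumption \ref{def:OC} and agrees with $\va$ there, while $\average{\nablah v_\TC} = 0$ on $\EhD\setminus J$.
\item \emph{Advection} $\Bform_a$: the volume term again reduces to $\TC$ and vanishes; the boundary term on $\Gin$ vanishes because $v_\TC$ is zero on $\GC$ and on $\GD$. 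The only nontrivial piece is the upwind jump sum over $\Eho$.
\end{itemize}

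\textbf{Main obstacle: the advection upwind flux on $J$.} On interior $\EhC$-faces $\jump{\va - u_0} = 0$; on interior $\EhD \setminus J$-faces $v_\TC^\insymbol = 0$; the genuinely delicate faces are those in $J$, where $\va$ is generally discontinuous (smooth $u_0$ on the $\TC$ side, piecewise polynomial dG approximation on the $\TD$ side), so $\jump{\va - u_0}$ need not vanish. The rescue is Assumption \ref{ass:binterface}: because $\vecb$ points strictly from $\TC$ into $\TD$ across $J$, the ``inflow'' element for any $e \in J$ is the $\TD$ element, so $v_\TC^\insymbol = v_\TC|_{\TD} = 0$, killing the upwind integrand.

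Collecting these observations one obtains $\Btilde{\epsilon}(\va - u_0, v_\TC) = 0$, which combined with the $\TD$-identity proves $\Btilde{\epsilon}(\va, v) = \Btilde{\epsilon}(u_0, v) = \Btilde{\epsilon}(\votilde, v)$ for every $v \in \VcdG{}$.
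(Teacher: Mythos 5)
Your proposal is correct and follows essentially the same route as the paper: split $v = \chi v + (1-\chi)v$, dispose of the $\TD$ part via the defining relation \eqref{va_TD}, and observe that on the $\TC$ part the only potentially dangerous contribution is the advective upwind flux on $J$, which vanishes because Assumption \ref{ass:binterface} forces the downwind trace $(v-\chi v)^\insymbol$ to be taken from the $\TD$ side, where it is zero (the paper's proof is just a terser version of your term-by-term check, likewise relying on the absence of $J$-integrals in $\Btilde{d}$).
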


\begin{proof}
Fix $v\in\VcdG{}$. Then using \eqref{votilde}
\begin{equation*}
  \Btilde{\epsilon}(\votilde,v) = \Btilde{\epsilon}(u_0,v) = \Btilde{\epsilon}(u_0,v-\chi v) +\Btilde{\epsilon}(u_0,\chi v)
\end{equation*}
where $\chi$ is defined in \eqref{chidG}. Observe that $\Btilde{\epsilon}(u_0,\chi v) = \Btilde{\epsilon}(\va, \chi v)$ by \eqref{va_TD}. Notice that $v - \chi v$ and $u_0$ are continuous on $\TC$. Recall that no integral over $J$ appears in the definition of $\Btilde{d}$. For $\Btilde{a}(\votilde,v-\chi v)$, the integral over $J$ vanishes since the value of $(v-\chi v)^\insymbol$ is zero because of Assumption \ref{ass:binterface}. Therefore $\Btilde{\epsilon}(\votilde,v-\chi v) = \Btilde{\epsilon}(u_0,v-\chi v) = \Btilde{\epsilon}(\va,v-\chi v)$.
\end{proof}

\begin{lemma} \label{thm:nablau0bound}
We have $\| \votilde \|_{H^1(\TC)} \lesssim 1$.
\end{lemma}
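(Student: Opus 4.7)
\emph{Proof plan.} The strategy parallels Theorem~\ref{thm:vepstilde}: since the auxiliary $\va$ of Section~\ref{sec:hypcontrol} is not in $\VcdG{}$ (its restriction to $\TC$ is $u_0$), we first replace it by a $\VcdG{}$-conforming surrogate $\vhat$, then combine coercivity with the Galerkin-type identity of Lemma~\ref{thm:Bequivalenceva}, and finally convert the resulting $\triple{\cdot}$-bound into an $\SH{1}{\TC}$-estimate via the inverse inequality from Assumption~\ref{ass:quasi}. Concretely, set $\vhat := \SZ{u_0}$ on $\TC$ and $\vhat := \va$ on $\TD$; this defines an element of $\VcdG{}$ because the Scott-Zhang image is continuous on $\overline{\TC}$ and, as $u_0|_{\Gin}=0$ and the $\Omega$-decomposition is chosen so that $\GC \subset \Gin$, vanishes on $\GC$. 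Let $\eta := \vhat - \va$, so that $\eta = \SZ{u_0} - u_0$ on $\TC$ and $\eta = 0$ on $\TD$, and $\xi := \vhat - \votilde \in \VcdG{}$. Theorem~\ref{thm:Bcoercivity} and Lemma~\ref{thm:Bequivalenceva} then give
\begin{equation*}
  \tfrac{1}{4}\triple{\xi}^2 \le \Btilde{\epsilon}(\xi,\xi) = \Btilde{\epsilon}(\vhat - \va, \xi) = \Btilde{\epsilon}(\eta, \xi - \chi\xi) + \Btilde{\epsilon}(\eta, \chi\xi).
\end{equation*}

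For the first summand, Assumption~\ref{ass:binterface} forces $(\xi - \chi\xi)^\insymbol = 0$ on $J$ (the upwind side lies in $\TD$, where the test function vanishes), so the argument of Theorem~\ref{thm:vepstilde} carries over verbatim: Cauchy-Schwarz and Young applied to the diffusion, advection and reaction parts, together with the approximation property \eqref{SZ2} and Assumption~\ref{def:OC}, yield $|\Btilde{\epsilon}(\eta, \xi - \chi\xi)| \le C h_\TC^2 + \tfrac{1}{8}\triple{\xi}^2$. The main new obstacle is the second summand, which vanished in Theorem~\ref{thm:vepstilde} by the definition of $\va$ there, but here leaves an advective flux across $J$: because $\eta$ is supported in $\TC$, $\chi\xi$ in $\TD$, and $J \subset \EhD$ is excluded from $\Btilde{d}$,
\begin{equation*}
  \Btilde{\epsilon}(\eta,\chi\xi) = -\sum_{e\in J}\int_e (\vecb\cdot n^\C)(\SZ{u_0}-u_0)|_\TC \,\xi|_\TD \ds.
\end{equation*}
Splitting $|\vecb\cdot n^\C|^{1/2}$ between the two factors, a scaled trace inequality with Scott-Zhang approximation bounds $\sum_{e\in J}|\vecb\cdot n^\C|\,\|\SZ{u_0}-u_0\|^2_\SL{2}{e}$ by $C h_\TC^3 \|u_0\|^2_\SH{2}{\OC} \lesssim h_\TC^3$. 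For the other factor, writing $\xi|_\TD = \xi|_\TC - (\xi|_\TC - \xi|_\TD)$ and combining a polynomial trace inequality on the $\TC$-side with the $L^2$-control $\|\cbhalf \xi\|_\SL{2}{\Omega}$ and the jump contributions present in $\triple{\xi}$ produces $\sum_{e\in J}|\vecb\cdot n^\C|\,\|\xi|_\TD\|^2_\SL{2}{e} \lesssim h_\TC^{-1}\triple{\xi}^2$. Hence $|\Btilde{\epsilon}(\eta,\chi\xi)| \lesssim h_\TC \triple{\xi}$, which is absorbed by Young's inequality.

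Collecting these inequalities gives $\triple{\xi}^2 \lesssim h_\TC^2$, so $\|\xi\|_\SL{2}{\TC} \lesssim h_\TC$. The quasi-uniformity in Assumption~\ref{ass:quasi} now supplies an inverse inequality on $\TC$, yielding $\|\xi\|_\SH{1}{\TC} \lesssim h_\TC^{-1}\|\xi\|_\SL{2}{\TC} \lesssim 1$. Scott-Zhang stability \eqref{SZ1} combined with Assumption~\ref{def:OC} delivers $\|\SZ{u_0}\|_\SH{1}{\TC} \lesssim \|u_0\|_\SH{2}{\OC} \lesssim 1$, and the triangle inequality then gives $\|\votilde\|_\SH{1}{\TC} \le \|\SZ{u_0}\|_\SH{1}{\TC} + \|\xi\|_\SH{1}{\TC} \lesssim 1$. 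The principal technical hurdle is the interface term $\Btilde{\epsilon}(\eta,\chi\xi)$: unlike in Theorem~\ref{thm:vepstilde}, no Galerkin orthogonality kills it, so its estimate couples Scott-Zhang approximation on the $\TC$-side of $J$ with trace/inverse estimates on the $\TD$-side, and it is precisely the non-characteristic scaling \eqref{binterface} that makes the resulting bound work.
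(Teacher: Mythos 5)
Your proposal is correct and follows essentially the same route as the paper: the same conforming surrogate ($\SZ{u_0}$ on $\TC$, $\va$ on $\TD$), the same splitting $\eta+\xi$ with coercivity and Lemma~\ref{thm:Bequivalenceva}, the same isolation of the advective interface flux on $J$ (your $\chi$-decomposition of the test function just reorganises the identical terms), the same trace/Young absorption of that flux, and the same inverse-inequality plus Scott--Zhang stability conclusion. The only cosmetic difference is that you place the polynomial trace/inverse estimate on the $\TC$-side of $\xi$ together with the jump term of $\triple{\xi}$, whereas the paper applies it directly on the $\TD$-side; both yield the same $h_\TC^{-1}\triple{\xi}^2$ bound.
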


\begin{proof}
Define $\vpitilde$ to be
\begin{eqnarray*}
	\vpitilde &:= \left\{ 
		\begin{array}{ll}
			\SZ{u_0} & \mathrm{on}~ \TC,\\
			\va &\mathrm{on}~ \TD,
		\end{array} \right.
\end{eqnarray*}
and let $\eta:=\va-\vpitilde$, $\xi:=\vpitilde-\votilde$. With these definitions $\eta+\xi=\va -\votilde$, $\eta|_\TD = 0$ and $\xi$ and $\eta$ are continuous on $\TC$. Then using Lemma \ref{thm:Bequivalenceva} we have
\begin{align*}
  \textstyle \frac{1}{4} \triple{\xi}^2 \le \; & \Btilde{\epsilon}(\xi,\xi) = -\Btilde{\epsilon}(\eta,\xi)\\
  = \; &  - \int_\TC \epsilon \nablah \eta \cdot \nablah \xi + (\vecb\cdot\nablah\eta)\xi + c\eta\xi \dx + \int_J \vecb \cdot \jump{\eta} \xi^\insymbol \ds.
\end{align*}
Due to Assumption \ref{ass:binterface} we have $\xi^\insymbol = \xi^\D$, the trace from the dG side of $J$, and $\jump{\eta} = \eta^\C n^\C$, the trace and normal from the cG side of $J$. We split each of the terms using Young's inequality, giving
\begin{align} \nonumber
 \textstyle \frac{1}{4} \triple{\xi}^2 
  &\le 2 \epsilon \norm{\nablah \eta}^2_\SL{2}{\TC} + \frac{\epsilon}{8}\norm{\nablah \xi}^2_\SL{2}{\TC} + \frac{4}{\rho}\norm{\vecb \cdot \nablah \eta}_\SL{2}{\TC} + \frac{\rho}{16}\norm{\xi}^2_\SL{2}{\TC}\\
  &\qquad + \frac{4}{\rho}\norm{c}^2_\SL{\infty}{\Omega}\norm{\eta}^2_\SL{2}{\TC} + \frac{\rho}{16}\norm{\xi}^2_\SL{2}{\TC} + \int_J (\vecb \cdot n^\C \eta^\C)\xi^\D \ds.  \label{xi-bound-A}
\end{align}
For the final term we note that $\xi$ is a polynomial and so using Young's inequality and a trace and inverse inequality (with constant $\Cti$) gives
\begin{align} \label{xi-bound-B}
  \int_J (\vecb \cdot n^\C \eta^\C)\xi^\D \ds &\le \frac{4 \Cti\norm{\vecb}^2_\SL{\infty}{\Omega}}{h_e \rho} \norm{\eta^\C}^2_\SL{2}{J} +  \frac{\rho}{16} \norm{\xi}^2_\SL{2}{\TD}.
\end{align}
We combine \eqref{xi-bound-A} and \eqref{xi-bound-B} to hide all terms of $\xi$ under the norm on the left-hand side of \eqref{xi-bound-A}. Using \eqref{SZ2} for the terms of $\eta$ and a trace inequality gives
\begin{align*}
  \rho \norm{\xi}^2_\SL{2}{\TC} \le \triple{\xi}^2 \lesssim (\epsilon h_\TC^2 + h_\TC^4 + h_\TC^2)\norm{u_0}^2_\SH{2}{\TC} \lesssim h_\TC^2 \norm{u_0}^2_\SH{2}{\TC}
\end{align*}
and, by an inverse inequality, $\| \xi \|_{H^1(\TC)}^2 \lesssim 1$. Now the result follows from the stability of the Scott-Zhang operator.
\end{proof}

%%%%%%%%%%%%%%%%%%%%%%%%%%%%%%%%%%%%%%%%%%%%%%%%%%%%%%%%%%%%%%%%%%%%%%%%%%%%%%%%
%%%%%%%%%%%%%%%%%%%%%%%%%%%%%%%%%%%%%%%%%%%%%%%%%%%%%%%%%%%%%%%%%%%%%%%%%%%%%%%%
%
%%%%%%%%%%%%%%%%%%%%%%%%%%%%%%%%%%%%%%%%%%%%%%%%%%%%%%%%%%%%%%%%%%%%%%%%%%%%%%%%
\section{Inf-Sup Condition}  \label{sec:infsup}
%%%%%%%%%%%%%%%%%%%%%%%%%%%%%%%%%%%%%%%%%%%%%%%%%%%%%%%%%%%%%%%%%%%%%%%%%%%%%%%%

The following theorem is an adaptation of related stability bounds in \cite{BHS06} and \cite{AM09} to fit the above assumptions. Although the verification of the below inf-sup condition follows the overall structure in \cite{BHS06}, we state it here in detail as the present analysis extends the scope to non-constant advection coefficients via the incorporation of $\Proj{}{\D}$ as \cite{BHS06}. Moreover, it deals with the modification of the bilinear form on $J$ and it only has streamline control on the $\TD$ side. It is helpful to recall that $\Proj{}{\D} v|_\TC = 0$ for any $v$.

\begin{theorem} \label{thm:infsupcdGBtilde}
There exists a positive constant $\Lis$ which is independent of $h$ and $\epsilon$ but may depend on the polynomial degree, $\sigma$, and the constants in \eqref{infsupInverse} and \eqref{infsupJump} such that:
\begin{equation}
  \inf_{v \in \VcdG{}} \; \sup_{\vhat \in \VcdG{}} \; \frac{\Btilde{\epsilon}(v,\vhat)}{\norm{v}_\SDGr\norm{\vhat}_\SDGr} \ge \Lis.
\end{equation}
\end{theorem}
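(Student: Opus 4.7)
The strategy is to combine the coercivity of Theorem \ref{thm:Bcoercivity} with a classical streamline-diffusion perturbation of the test function, along the lines of \cite{BHS06,AM09}. For a given $v \in \VcdG{}$, define the streamline corrector $\ws \in \VcdG{}(\TD) \subset \VcdG{}$ elementwise by $\ws|_E := \tau_E\,\Proj{}{\D}(\vecb \cdot \nablah v)|_E$, which vanishes identically on $\TC$ by the definition of $\Proj{}{\D}$. Then consider the composite test $\vhat := v + \delta\ws$, where $\delta > 0$ will be fixed independently of $h$ and $\epsilon$. The pivotal computation, obtained from the $L^2$-orthogonality of $\Proj{}{\D}$ onto $\VcdG{}(\TD)$, is
\begin{equation*}
  \sum_{\EinTh} \int_E (\vecb \cdot \nablah v)\, \ws \dx \;=\; \sum_{\EinTh} \tau_E \, \|\Proj{}{\D}(\vecb \cdot \nablah v)\|_\SL{2}{E}^2 ,
\end{equation*}
which reproduces exactly the streamline seminorm appearing in $\norm{v}_\SDGr^2$.

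All remaining contributions in $\Btilde{\epsilon}(v,\ws)$ --- the skew and boundary flux pieces of $\Bform_a$, the reaction form $\Bform_r$, and the diffusion form $\epsilon\Btilde{d}$ --- must be estimated and absorbed. I would dispatch them via Cauchy--Schwarz together with the inverse inequality \eqref{infsupInverse} and the discrete trace bound \eqref{infsupJump}, making essential use of the definition \eqref{deftauCDG}: the bound $\tau_E \le \tau h_E/\norm{\vecb}_\SL{\infty}{E}$ keeps the advective flux and jump terms subordinate to the $\norm{\cdot}_{ar}$ part of $\triple{v}$, while $\epsilon \tau_E \le \tau h_E^2$ tames the $\epsilon$-weighted diffusion contributions uniformly in $\epsilon$. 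This should produce an inequality of the form
\begin{equation*}
  \Btilde{\epsilon}(v,\ws) \;\ge\; \tfrac12 \sum_{\EinTh} \tau_E\, \|\Proj{}{\D}(\vecb\cdot\nablah v)\|_\SL{2}{E}^2 \;-\; C\,\triple{v}^2 ,
\end{equation*}
where $C$ depends only on the polynomial degree, $\sigma$, the shape-regularity constants, and the constants in \eqref{infsupInverse}--\eqref{infsupJump}. Combining this with $\tfrac14 \triple{v}^2 \le \Btilde{\epsilon}(v,v)$ from Theorem \ref{thm:Bcoercivity} and picking $\delta$ small enough to absorb the $C\delta\triple{v}^2$ contribution into $\tfrac14 \triple{v}^2$ yields $\Btilde{\epsilon}(v,v+\delta\ws) \gtrsim \norm{v}_\SDGr^2$. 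A parallel application of the inverse and trace inequalities with the same choice of $\tau_E$ gives the continuity estimate $\norm{v+\delta\ws}_\SDGr \lesssim \norm{v}_\SDGr$ uniformly in $h$ and $\epsilon$, after which the inf-sup bound follows by division.

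The most delicate step will be the treatment of the cdG interface $J$, where $\Btilde{d}$ carries no penalization and only one-sided streamline control is available, since $\Proj{}{\D}$ sees only the $\TD$ side. The flux term $-\int_J \vecb\cdot\jump{v}\,\ws^\insymbol \ds$ arising in $\Bform_a(v,\ws)$ cannot be absorbed through a diffusion penalty as in the standard interior-penalty setting, and $\jump{v}$ need not vanish on $J$. Here Assumption \ref{ass:binterface} is indispensable: the strict positivity $\vecb\cdot n^\C > 0$ identifies the upwind side of $J$ as the $\TD$ side, so $\ws^\insymbol$ is controlled elementwise through an inverse inequality applied to $\ws$; and the quantitative lower bound $\tfrac14\vecb\cdot n^\C > \epsilon_\mathrm{max}\sigma/h_e^{3/2}$ is calibrated precisely so that the $\epsilon$-weighted traces generated by $\ws$ at $J$ are dominated by the $\abs{\vecb\cdot n}^{\fhalf}$ jump norm of $v$ already encoded in $\triple{v}$, thereby closing the absorption argument.
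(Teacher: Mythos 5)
Your proposal is correct and follows essentially the same route as the paper: the same streamline test function $\vhat = v + \delta\,\ws$ with $\ws = \sum_{\EinTh}\tau_E\Proj{}{\D}(\vecb\cdot\nablah v)$, the same orthogonality identity producing the streamline seminorm, the same absorption of the remaining advective, reactive and diffusive terms via \eqref{infsupInverse}, \eqref{infsupJump} and the two branches of \eqref{deftauCDG}, and the same two-part structure (boundedness of $\vhat$ plus a lower bound on $\Btilde{\epsilon}(v,\vhat)$). The one slight difference is your reading of the interface: in the paper the flux term on $J$ is estimated exactly like any other interior edge (Cauchy--Schwarz against the $\abs{\bdotn}^{\fhalf}$-weighted jump plus a trace/inverse bound on the $\TD$ side, which is where the trace $\ws^{\insymbol}$ is taken), and Assumption \ref{ass:binterface} enters only through the coercivity step of Theorem \ref{thm:Bcoercivity}, where it recovers the $\epsilon\sigma h_e^{-1}$ jump penalty on $J$ (absent from $\Btilde{d}$) from the advective jump term.
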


\begin{proof}
For an arbitrary $v \in \VcdG{}$, we define
\begin{equation} \label{infsupdecomposition}
  \vhat := v + \gamma \, \ws, \qquad \ws := \sum_{\EinTh} \tau_E \Proj{}{\D} (\bdnab v), 
\end{equation}
where $\gamma$ is a positive parameter at our disposal and $\tau_E$ is defined in \eqref{deftauCDG}. Note that through the definition of $\Proj{}{\D}$ we have $\vhat,\ws \in \VcdG{}$. Theorem \ref{thm:infsupcdGBtilde} is equivalent to showing the following two results:
\begin{align}
  \norm{\vhat}_\SDGr &\lesssim \norm{v}_\SDGr, \label{infsupcdG1}\\
  \Btilde{\epsilon}(v,\vhat) &\gtrsim \norm{v}^2_\SDGr \label{infsupcdG2}.
\end{align}

Consider first \eqref{infsupcdG1}. We examine each term of $\norm{\ws}^2_\SDGr$ in turn. We have
\begin{equation}\label{eq:ell_bound}
\begin{split}
\sum_\EinTh \epsilon\abs{\ws}^2_\SH{1}{E} \lesssim \; &  \sum_\EinTh \epsilon h_E^{-2}\norm{\tau_E \Proj{}{\D}(\bdnab v)}^2_\SL{2}{E}\\
\le \; & \sum_\EinTh \tau\tau_E \norm{\Proj{}{\D}(\bdnab v)}^2_\SL{2}{E} \lesssim \norm{v}_\SDGr^2.  
\end{split}
\end{equation}
Also
\begin{align} \label{eq:hyp_bound}
  \norm{\cbhalf \ws}^2_\SL{2}{\Omega} &\le \norm{\cb}_\SL{\infty}{\Omega} \sum_{E \in \Th} \tau_E^2 \norm{\Proj{}{\D}(\bdnab v)}^2_\SL{2}{E} \lesssim \norm{v}_\SDGr^2.
\end{align}
For the terms on the edges we use \eqref{infsupJump}. This gives
\begin{align} \label{eq:hyp_jump_bound}
  \sum_{\einEh} \norm{\abs{\bdotn}^\fhalf \jump{\ws}}^2_\SL{2}{e} &\lesssim \sum_\EinTh \norm{\vecb}_\SL{\infty}{\Omega} \tau_E^2 h_E^{-1} \norm{\Proj{}{\D}(\bdnab v)}^2_\SL{2}{E} \lesssim \norm{v}_\SDGr^2.
\end{align}
Similarly,
\begin{align} \label{eq:ell_jump_bound}
  \sum_{e \in \Eh} \frac{\sigma\epsilon}{h_e}\norm{\jump{\ws}}^2_\SL{2}{e} &\lesssim \sum_\EinTh \tau_E^2 \frac{\sigma\epsilon}{h_E^2}\norm{\Proj{}{\D}(\bdnab v)}^2_\SL{2}{E} \lesssim \norm{v}_\SDGr^2.
\end{align}
The final term of the streamline norm gives
\begin{align*}
	\sum_\EinTh \tau_E\norm{\Proj{}{\D}(\bdnab \ws) }^2_\SL{2}{E} &\le \sum_\EinTh \tau_E  \norm{\bdnab \left( \tau_E \Proj{}{\D} (\bdnab v) \right)}^2_\SL{2}{E}\\
&\lesssim  \sum_\EinTh \tau_E^3 \norm{\vecb}_\SL{\infty}{E}^2 h_E^{-2}\norm{\Proj{}{\D}(\bdnab v) }_\SL{2}{E}^2 \lesssim \norm{v}_\SDGr^2.
\end{align*}
Combining the above results we have $\norm{\ws}_\SDGr^2 \lesssim \norm{v}_\SDGr^2$. Using a triangle inequality we find
\begin{align*}
  \norm{\vhat}_\SDGr  &\le \norm{v}_\SDGr + \gamma \, \norm{\ws}_\SDGr \le C(\tau,\sigma,\gamma) \norm{v}_\SDGr,
\end{align*} 
which concludes the proof of \eqref{infsupcdG1}.

To prove \eqref{infsupcdG2} first consider the advection and reaction terms of the norm. Using the linearity of $\Bform_{ar}$ we have $\Bform_{ar}(v,\vhat) = \Bform_{ar}(v,v) + \gamma \Bform_{ar}(v,\ws)$. The second term is given by 
\begin{align*}
  \Bform_{ar}(v,\ws) &= \sum_\EinTh \int_E c v (\tau_E \Proj{}{\D}(\bdnab v)) + (\bdnab v)(\tau_E \Proj{}{\D}(\bdnab v)) \dx\\
  &\qquad - \sum_\einEho \int_e \vecb \cdot \jump{v}(\tau_E \Proj{}{\D} (\bdnab v))^\insymbol \ds - \sum_{e\in \Gin} \int_e (\bdotn) v(\tau_E \Proj{}{\D}(\bdnab v)) \ds.
\end{align*}
Using the properties of $\Proj{}{\D}$ given in \eqref{ProjD} the second term above becomes
\begin{equation}
\begin{split} \label{BarSpart}
  \sum_\EinTh \int_E (\bdnab v)(\tau_E \Proj{}{\D}(\bdnab v)) \dx & = \sum_\EinTh \int_E \tau_E \Proj{}{\D}(\bdnab v) \Proj{}{\D}(\bdnab v) \dx\\ &= \sum_\EinTh \tau_E \norm{\Proj{}{\D}(\bdnab v)}^2_\SL{2}{E}.
\end{split}
\end{equation}
Using Young's inequality we have
\begin{align*}
   \Bigl| \sum_\EinTh \int_E c v (\tau_E \Proj{}{\D}(\bdnab v)) \dx \Bigr| &\le \| c \|_{L^\infty(\Omega)} \sum_\EinTh \frac{1}{2} \norm{v}^2_\SL{2}{E} + \frac{1}{2}\tau_E^2 \norm{\Proj{}{\D}(\bdnab v)}^2_\SL{2}{E}
\end{align*}
and, where $C$ arises from a trace inequality and the number of edges per element,
\begin{align*}
  &- \sum_\einEho \int_e \vecb \cdot \jump{v}(\tau_E \Proj{}{\D} (\bdnab v))^\insymbol \ds - \sum_{e\in \Gin} \int_e (\bdotn) v(\tau_E \Proj{}{\D} (\bdnab v)) \ds\\
  & \qquad \le \sum_\einEh \frac{C \lambda}{2} \norm{\abs{\bdotn}^\fhalf\jump{v}}^2_\SL{2}{e} + \sum_\EinTh \frac{\tau_E \tau}{2\lambda} \norm{\Proj{}{\D}(\bdnab v)}^2_\SL{2}{E}.
\end{align*}
In conclusion, using \eqref{forJterms}, we have
\begin{equation} \label{infsupcdGBar}
\begin{split}
  \Bform_{ar}(v,\vhat) &\ge \left(\rho-\frac{\gamma \| c \|_{L^\infty(\Omega)}}{2}\right)\sum_\EinTh \norm{v}^2_\SL{2}{E} + \left(\half - \frac{\gamma C  \lambda}{2}\right)\sum_\einEh \norm{\abs{\bdotn}^\fhalf\jump{v}}^2_\SL{2}{e}\\
  &\qquad + \gamma\sum_\EinTh \left(\tau_E-\frac{\tau_E^2}{2}-\frac{\tau_E\tau}{2\lambda}\right) \norm{\Proj{}{\D}(\bdnab v)}^2_\SL{2}{\Omega}.
\end{split}
\end{equation}
Recall that $\norm{h_E / b}_\SL{\infty}{\TD} \le 1$ by Assumption \ref{ass:Peclet}, which imples $\tau_E \le 1$ for all $E \in \Th$. For general $v$, all terms on the right-hand side of \eqref{infsupcdGBar} are positive, provided $\lambda$ is large and $\gamma$ is small enough.

Equation (\ref{eqn:sigma}) ensures the continuity of $\Btilde{d}$ with respect to $\norm{\,\cdotp}_d$; thus 
\begin{align} \label{eq:cont}
\Btilde{d}(v,\vhat) \le C_1 \norm{v}_d \norm{\vhat}_d,
\end{align}
for some $C_1 > 0$. Recalling \eqref{eq:ell_bound} and \eqref{eq:ell_jump_bound}, it is clear that $\norm{\ws}_d \le C_2 \norm{v}_d$, for some $C_2 > 0$. Hence
\begin{align}
\Btilde{d}(v,\vhat) = \Btilde{d}(v,v) + \gamma \Btilde{d}(v,\ws) \ge \textstyle \frac{1}{4} \norm{v}_d^2 - \gamma C_1 \norm{v}_d \norm{\ws}_d.
\end{align}
Thus, if $\gamma < C_1 C_2 / 8$, then $\Btilde{d}(v,\vhat) \ge \frac{1}{8} \norm{v}_d^2$ which, combined with \eqref{infsupcdGBar}, gives \eqref{infsupcdG2}.
\end{proof}

%%%%%%%%%%%%%%%%%%%%%%%%%%%%%%%%%%%%%%%%%%%%%%%%%%%%%%%%%%%%%%%%%%%%%%%%%%%%%%%%
%%%%%%%%%%%%%%%%%%%%%%%%%%%%%%%%%%%%%%%%%%%%%%%%%%%%%%%%%%%%%%%%%%%%%%%%%%%%%%%%
%
%%%%%%%%%%%%%%%%%%%%%%%%%%%%%%%%%%%%%%%%%%%%%%%%%%%%%%%%%%%%%%%%%%%%%%%%%%%%%%%%
\section{Stability of the Decoupled and Standard Approximations}  \label{sec:combined}
%%%%%%%%%%%%%%%%%%%%%%%%%%%%%%%%%%%%%%%%%%%%%%%%%%%%%%%%%%%%%%%%%%%%%%%%%%%%%%%%

We saw that, under a set of suitable assumptions, the decoupled approximation satisfies the stability bounds:
\begin{align} \label{eq:vhtildestab}
\| \vhtilde \|_{H^1(\TC)} \lesssim 1, \qquad \| \vhtilde \|_\SDGr \lesssim \| f \|_{L^2(\Omega)}.
\end{align}
The first bound is a consequence of Theorem \ref{thm:vepstilde} and Lemma \ref{thm:nablau0bound}, the second of Theorem~\ref{thm:infsupcdGBtilde}. So while one has streamline-diffusion stability on $\TD$, an even stronger bound is available on~$\TC$ under the aforementioned assumptions. We now derive a stability result for the cdG method. We require that the geometry of the interface $J$ does not become significantly more complicated as the mesh is refined. More precisely, we require the boundedness of the trace operator.
\begin{theorem} \label{thm:vhstab}
Suppose that the operator norm of the trace $H^1(\TC) \to L^2(J)$ is bounded independently of $h$. Then, the cdG approximation $\vh$ is stable in the sense that
\[
h_\TC \norm{\nablah \vh}^2_\SL{2}{\TC} + \| \vh \|_\SDGr^2 \lesssim 1 + \| f \|_{L^2(\Omega)}^2.
\]
\end{theorem}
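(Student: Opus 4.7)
The plan is a perturbation argument that leverages the bounds already proved for the decoupled scheme. I would write $\vh = \vhtilde + \delta$ with $\delta\in\VcdG{}$. Since $\Beps$ and $\Btilde{\epsilon}$ differ only by the standard interior-penalty terms on $J$, subtracting $\Btilde{\epsilon}(\vhtilde,\vhat) = \int_\Omega f\vhat\dx = \Beps(\vh,\vhat)$ produces the defect identity
\begin{equation*}
\Btilde{\epsilon}(\delta,\vhat) = -\epsilon \sum_{e\in J} \int_e \bigl[\tfrac{\sigma}{h_e}\jump{\vh}\cdot\jump{\vhat} - \average{\nablah \vh}\cdot\jump{\vhat} - \average{\nablah \vhat}\cdot\jump{\vh}\bigr]\ds \qquad \forall\,\vhat\in\VcdG{}.
\end{equation*}
Everything afterwards is aimed at estimating the right-hand side of this identity.

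For the streamline-diffusion part of the claim I would first apply coercivity. Testing $\Beps(\vh,\vh)=\int_\Omega f\vh\dx$ with Theorem~\ref{thm:Bcoercivity} and $\norm{\vh}_{L^2(\Omega)}\lesssim\triple{\vh}$ gives $\triple{\vh}\lesssim\|f\|_{L^2(\Omega)}$, so both $\norm{\vh}_d$ and $\norm{\vh}_{ar}$ are under control. I would then apply the inf-sup of Theorem~\ref{thm:infsupcdGBtilde} to $\delta$ and dispatch the three $J$-integrals as follows: the penalty term uses Assumption~\ref{ass:binterface}, which gives $\epsilon\sigma/h_e \le \tfrac14 h_e^{1/2}(\vecb\cdot n^\C)$ on $J$ and hence bounds it by $C h^{1/2}\norm{\vh}_{ar}\norm{\vhat}_{ar}$; each flux term is handled by a discrete trace inverse on both sides of $e\in J$ together with the elementwise controls $\sqrt{\epsilon}\,\norm{\nablah\vh}_{L^2(E)}\le\norm{\vh}_d$ and $\sqrt{\epsilon\sigma/h_e}\,\norm{\jump{\vhat}}_{L^2(e)}\le\norm{\vhat}_d$, producing bounds of the form $C\norm{\vh}_d\norm{\vhat}_d$. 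Summing and combining with the coercivity bound leads to $\norm{\delta}_\SDGr\lesssim\|f\|_{L^2(\Omega)}$; a triangle inequality with~\eqref{eq:vhtildestab} then delivers $\norm{\vh}_\SDGr\lesssim\|f\|_{L^2(\Omega)}$.

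For the $H^1(\TC)$ part I would split $\nablah\vh = \nablah\vhtilde + \nablah\delta$. The first piece is handled by~\eqref{eq:vhtildestab}: $h_\TC\norm{\nablah\vhtilde}^2_{L^2(\TC)}\lesssim h_\TC\lesssim 1$. For the second, Assumption~\ref{ass:quasi} and an inverse inequality give $\norm{\nablah\delta}_{L^2(\TC)}\lesssim h_\TC^{-1}\norm{\delta}_{L^2(\TC)}$, so it is enough to prove $\norm{\delta}_{L^2(\TC)}\lesssim h_\TC^{1/2}(1+\|f\|_{L^2(\Omega)})$. Here the assumed boundedness of the trace $H^1(\TC)\to L^2(J)$ should enter: restricting the defect identity to test functions supported on $\TC$ exhibits $\delta|_\TC$ as the cG approximation on $\TC$ to an advection-diffusion-reaction problem whose boundary data on $J$ carries an explicit $\epsilon$ factor, and a duality/Nitsche-type argument on this subproblem, made possible by the trace regularity of $J$, should convert the global $\norm{\delta}_\SDGr$ bound into the sharpened $h_\TC^{1/2}$ bound in $L^2(\TC)$.

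The hard part will be this last refinement. The naive chain -- inf-sup giving $\norm{\delta}_{L^2(\TC)}\lesssim\|f\|_{L^2(\Omega)}$, then the inverse inequality -- loses an $h_\TC$ factor and yields an unacceptable $h_\TC^{-1}\|f\|^2$ on the right-hand side. Closing this gap genuinely requires the trace regularity of the interface $J$ together with a localised argument on $\TC$ (the non-characteristic condition Assumption~\ref{ass:binterface} enters here to ensure the $J$-jumps are controlled by $\norm{\cdot}_{ar}$, and hence by $\|f\|_{L^2(\Omega)}$). Everything else -- the streamline bound, coercivity, the defect identity, and the inverse-inequality reduction for $H^1(\TC)$ -- amounts to bookkeeping once the identity is in place.
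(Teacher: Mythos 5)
Your overall architecture is right up to the final step, and your treatment of the streamline part is a legitimate variant of the paper's: the paper establishes the inf-sup condition directly for $\Beps$ (re-running Theorem~\ref{thm:infsupcdGBtilde} with $\Bform_d(v,\vhat)\le C_1\triple{v}\,\triple{\vhat}$ in place of \eqref{eq:cont}), whereas you bound $\triple{\vh}$ by coercivity and then apply the inf-sup for $\Btilde{\epsilon}$ to the defect; both reach $\norm{\vh}_\SDGr\lesssim\norm{f}_\SL{2}{\Omega}$. The genuine gap is precisely the step you flag as ``the hard part'': the refined bound $\norm{\delta}_\SL{2}{\TC}\lesssim h_\TC^{1/2}(1+\norm{f}_\SL{2}{\Omega})$ is left as an unexecuted sketch of a duality/Nitsche argument on a subproblem of $\TC$. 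That fallback is not safe here: the adjoint of the advection-dominated subproblem on $\TC$ again has layers, and the $\epsilon$-uniform regularity a Nitsche trick requires is exactly what Assumption~\ref{ass:Peclet} (pre-asymptotic regime) forbids you from assuming. As written, the $H^1(\TC)$ half of the theorem is not proved.

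No duality is needed; the refinement is a direct estimate. Test the defect identity with $\vhat=\delta$ and use coercivity, $\tfrac14\triple{\delta}^2\le\Btilde{\epsilon}(\delta,\delta)$. Every term on the right lives only on $J$ and carries an explicit factor $\epsilon$. After Cauchy--Schwarz, the $\delta$-factor must be absorbed into $\triple{\delta}$ by measuring $\jump{\delta}$ on $J$ in the \emph{advective} jump norm: Assumption~\ref{ass:binterface} gives $\epsilon\sigma h_e^{-3/2}\lesssim\vecb\cdot n^\C$, hence $(\epsilon\sigma/h_e)^{\fhalf}\norm{\jump{\delta}}_\SL{2}{e}\lesssim h_e^{1/4}\,\norm{\abs{\bdotn}^{\fhalf}\jump{\delta}}_\SL{2}{e}$. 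Your estimate of the flux terms by $C\norm{\cdot}_d\norm{\cdot}_d$ discards exactly this gain, which is why your naive chain loses the factor of $h_\TC$. What remains is a factor involving only $\vhtilde$, and it is $O(h_\TC^{1/2})$ by the bounds already in hand (this is \eqref{zetabound}):
\begin{equation*}
\triple{\delta}^2 \;\lesssim\; \epsilon\, h_\TC\norm{\nablah\vhtilde}^2_\SL{2}{\Omega} \;+\; \frac{\epsilon\sigma}{h_\TC^{1/2}}\norm{\jump{\vhtilde}}^2_\SL{2}{J}
\;\le\; h_\TC\norm{\vhtilde}_d^2 \;+\; h_\TC\,\frac{\epsilon\sigma}{h_\TC^{3/2}}\norm{\jump{\vhtilde}}^2_\SL{2}{J}
\;\lesssim\; h_\TC\bigl(1+\norm{f}^2_\SL{2}{\Omega}\bigr),
\end{equation*}
using \eqref{eq:vhtildestab} and again Assumption~\ref{ass:binterface} in the last step; the hypothesis on the trace $H^1(\TC)\to L^2(J)$ enters only to control the flux average of $\vhtilde$ on $J$ from the $\TC$ side, not through a dual problem. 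Since $\rho\norm{\delta}^2_\SL{2}{\TC}\le\triple{\delta}^2$, this is exactly the sharpened $L^2(\TC)$ bound you were missing, and your inverse-inequality reduction then closes the $H^1(\TC)$ part of the theorem.
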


\begin{proof}
Set $\zeta:=\vh-\vhtilde$. Using the coercivity of $\Beps$, Galerkin orthogonality and the norm of the trace $H^1(\TC) \to L^2(J)$, we have
\begin{align*}
  \textstyle \frac{1}{4} \triple{\zeta}^2 &\le \Beps(\zeta,\zeta) = \Btilde{\epsilon}(\vhtilde,\zeta)-\Beps(\vhtilde,\zeta) + \Beps(\vh,\zeta)-\Btilde{\epsilon}(\vhtilde,\zeta)\\
  & =\Btilde{\epsilon}(\vhtilde,\zeta)-\Beps(\vhtilde,\zeta)\\
  &= \epsilon \int_J   \{ \nablah \vhtilde\} \cdot \jump{\zeta} + \{ \nablah \zeta \} \cdot \jump{\vhtilde} -\frac{\sigma}{h_e}  \jump{\vhtilde} \cdot \jump{\zeta}\ds\\
  &\lesssim \epsilon \Bigl( h_\TC \norm{\nablah \vhtilde}^2_\SL{2}{\Omega} \cdot \frac{\epsilon\sigma}{h_e^{3/2}} \norm{\jump{\zeta}}^2_\SL{2}{J} + \norm{\nablah \zeta}^2_\SL{2}{\Omega} \cdot \frac{\epsilon\sigma}{h_e^{1/2}} \norm{\jump{\vhtilde}}^2_\SL{2}{J}\\
& \qquad + \int_J \frac{\sigma}{h_e}  \jump{\vhtilde} \cdot \jump{\zeta} \ds \Bigr) \\
  &\lesssim \left( \epsilon \, h_\TC \norm{\nablah \vhtilde}^2_\SL{2}{\Omega} + \frac{\epsilon\sigma}{h_e^{1/2}} \norm{\jump{\vhtilde}}^2_\SL{2}{J} \right)^{\!\fhalf} \left( \epsilon \norm{\nablah \zeta}^2_\SL{2}{\Omega} + \frac{\epsilon\sigma}{h_e^{3/2}} \norm{\jump{\zeta}}^2_\SL{2}{J} \right)^{\!\fhalf}
\end{align*} 
and thus, using Assumption \ref{ass:binterface} for $\epsilon\sigma h_e^{-3/2}$,
\begin{align} \label{zetabound}
  \triple{\zeta}^2 \lesssim \epsilon h_\TC \norm{\nablah \vhtilde}^2_\SL{2}{\Omega} + \frac{\epsilon\sigma}{h_\TC^{1/2}} \norm{\jump{\vhtilde}}^2_\SL{2}{J}.
\end{align}
Dividing through by $h_\TC$ and using an inverse inequality on $\rho \norm{\zeta}_\SL{2}{E}$ gives
\begin{align} \label{hzetabound}
  h_\TC \norm{\nablah \zeta}^2_\SL{2}{\TC} \lesssim h_\TC^{-1} \triple{\zeta}^2 \lesssim \epsilon \norm{\nablah \vhtilde}^2_\SL{2}{\Omega} + \frac{\epsilon\sigma}{h_\TC^{3/2}}\norm{\jump{\vhtilde}}^2_\SL{2}{J}.
\end{align}
Using Assumptions \ref{ass:Peclet} and \ref{ass:binterface}, as well as \eqref{eq:vhtildestab}, we bound each of the terms in \eqref{hzetabound}. Using a triangle inequality on $\| \nabla \zeta \|_{L^2(\TC)}$ we conclude that 
\[
h_\TC \norm{\nablah \vh}^2_\SL{2}{\TC} \lesssim 1.
\]
To show that $\| \vh \|_\SDGr$ is bounded we establish an inf-sup condition for $\Beps$. Indeed, \eqref{infsupcdG1} may be used without change. It remains to transfer \eqref{infsupcdG2} to $\Bform_{\epsilon}$. The inequality \eqref{infsupcdGBar} is still available as the discretesation of the lower-order terms did not change. Different is that we now use $\Bform_{d}(v,\vhat) \le C_1 \triple{v} \cdot \triple{\vhat}$ in place of \eqref{eq:cont}, justified by Assumption \ref{ass:binterface}. Appealing to \eqref{eq:ell_bound}--\eqref{eq:ell_jump_bound}, one has $\triple{\ws} \le C_2 \triple{v}$ for some $C_2 > 0$. Hence
\begin{align}
\Btilde{d}(v,\vhat) = \Btilde{d}(v,v) + \gamma \, \Btilde{d}(v,\ws) \ge \textstyle \frac{1}{4} \norm{v}_d^2 - \gamma \, C_1 \triple{v} \triple{\ws}.
\end{align}
For $\gamma \, C_1 C_2$ small enough and $\lambda$ sufficiently large, $\gamma \, C_1 \triple{v} \triple{\ws}$ is bounded by $\frac{1}{8} \norm{v}_d^2 + \half \Bform_{ar}(v,\vhat)$, using again the positivity of the terms in \eqref{infsupcdGBar}.
\end{proof}

%Observe that due to Assumption \ref{ass:binterface} the effect of the weaker elliptic penalisation on $J$ is compensated for by the jumps of the first-order terms in the considered parameter regime.

\begin{remark} \label{rem:cea}
Due to Assumptions \ref{ass:Peclet} and \ref{ass:binterface} the above stability bound is valid for the regime $\epsilon \lesssim h_E^{3/2} \norm{\vecb}_\SL{\infty}{\Omega}$. For completeness we briefly outline here how $\sqrt{h} \| \cdot \|_{H^1(\Omega)}$ stability of the cdG method is established if $\epsilon \gtrsim h_E^{3/2} \norm{\vecb}_\SL{\infty}{\Omega}$. The stability proof is in this case easier because the mesh P\'{e}clet number is smaller. If $\Omega$ is smooth or convex and the coefficients have sufficient regularity then $u$ is in $H^2(\Omega)$. Indeed for $\epsilon \ge \half h_E^{3/2} \norm{\vecb}_\SL{\infty}{\Omega}$ the $H^2$ norm of $u$ is uniformly bounded in $\epsilon$. Suppose that the mesh is quasi-uniform. By Ce\`a's Lemma, with $h := \max_E h_E$, a standard argument gives
\begin{align*}
\| u - \vh \|_{H^1(\Omega)} \lesssim (1 + \epsilon^{-1}) h \| u \|_{H^2(\Omega)} \lesssim (h + h^{-1/2}) \| u \|_{H^2(\Omega)},
\end{align*}
and thus $\sqrt{h} \| u - \vh \|_{H^1(\Omega)} \lesssim 1$.
\end{remark}

%%%%%%%%%%%%%%%%%%%%%%%%%%%%%%%%%%%%%%%%%%%%%%%%%%%%%%%%%%%%%%%%%%%%%%%%%%%%%%%%
%%%%%%%%%%%%%%%%%%%%%%%%%%%%%%%%%%%%%%%%%%%%%%%%%%%%%%%%%%%%%%%%%%%%%%%%%%%%%%%%
%
%%%%%%%%%%%%%%%%%%%%%%%%%%%%%%%%%%%%%%%%%%%%%%%%%%%%%%%%%%%%%%%%%%%%%%%%%%%%%%%%
\section{Numerical Experiment} \label{sec:NumericalExperiments}
%%%%%%%%%%%%%%%%%%%%%%%%%%%%%%%%%%%%%%%%%%%%%%%%%%%%%%%%%%%%%%%%%%%%%%%%%%%%%%%%
%%%%%%%%%%%%%%%%%%%%%%%%%%%%%%%%%%%%%%%%%%%%%%%%%%%%%%%%%%%%%%%%%%%%%%%%%%%%%%%%

\begin{figure}[tb]
  \centering
  \subfigure[Solution $u$ given by \eqref{cdG3true}.]{\includegraphics[width=0.40\textwidth]{./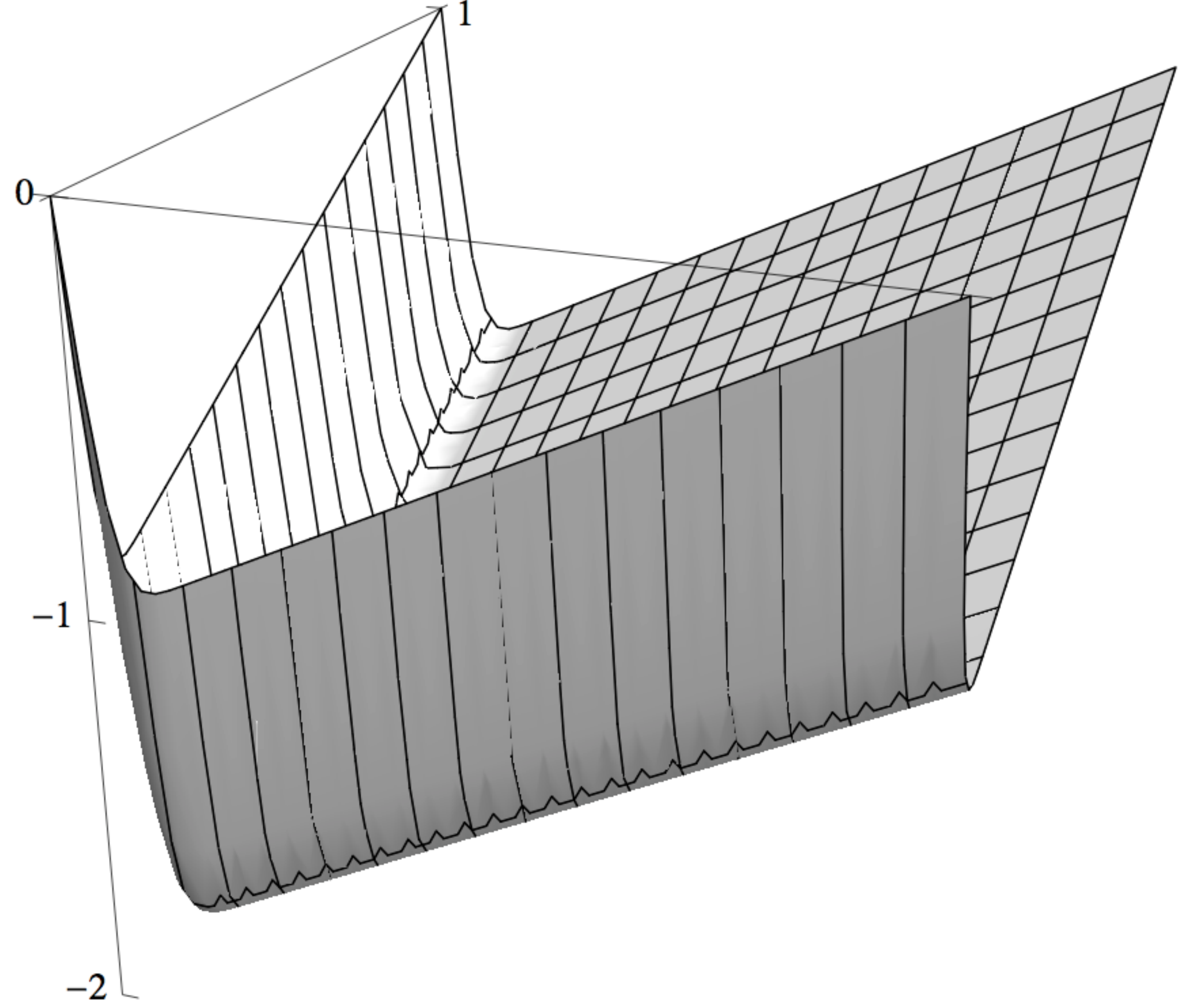}} \hspace{1.5cm}
  \subfigure[Solution $u_\epsilon$ given by \eqref{cdG3ueps}]{\includegraphics[width=0.40\textwidth]{./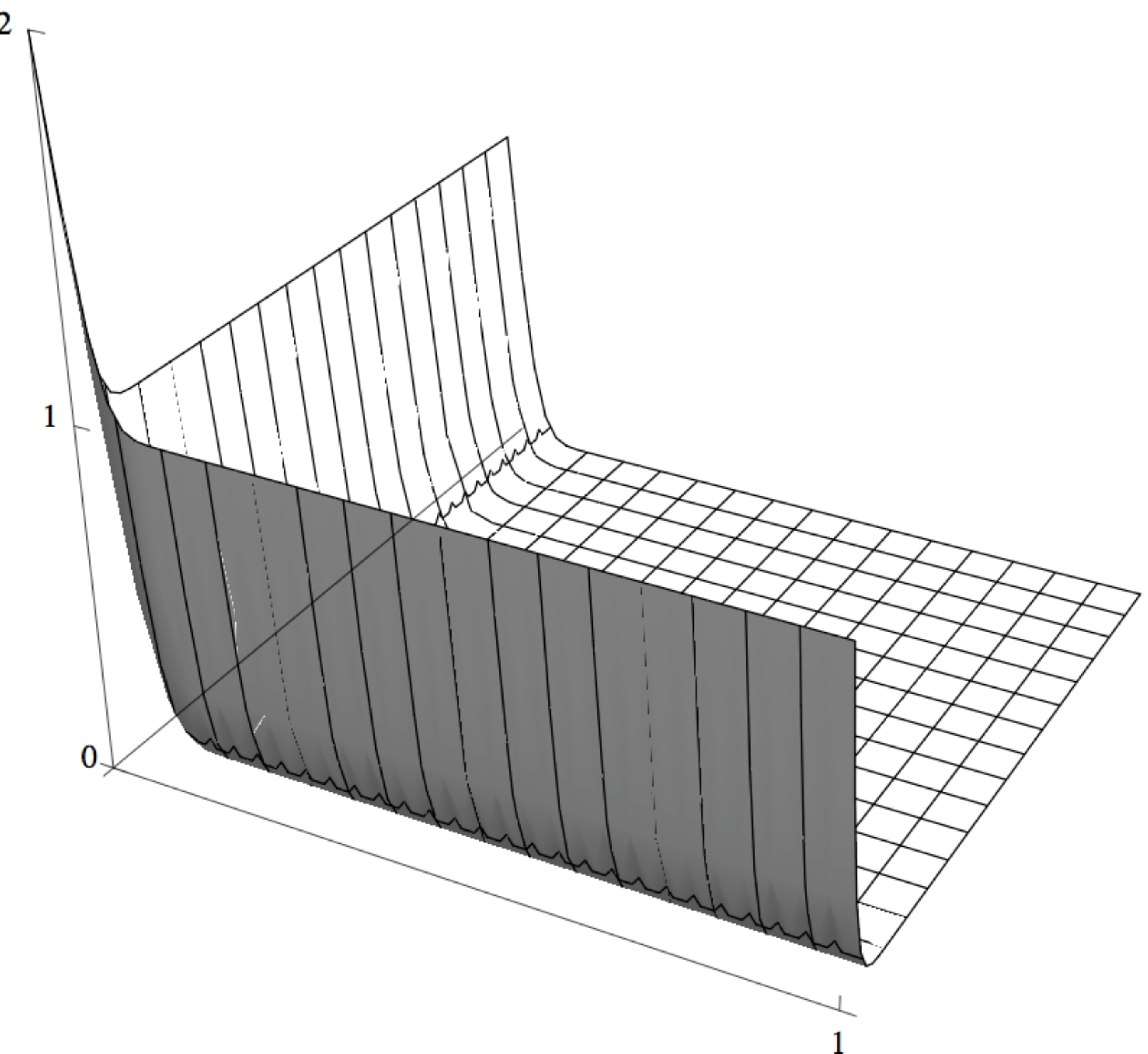}}  
  \caption{Solution $u$ and $u_\epsilon$ for $\epsilon = 10^{-3}$.}
  \label{fig:cdG3true}
\end{figure}

Let $\Omega = (0,1)^2$. We seek to solve
\begin{equation} \label{cdG3}
  -\epsilon \Delta u + (-x,-y)\cdot \nabla u = -x-y
\end{equation}
with Dirichlet boundary conditions chosen such that the solution is given by
\begin{equation} \label{cdG3true}
  u(x,y) = x+y - \frac{ \erf \left(x/\sqrt{2\epsilon}\right) +\erf \left(y/\sqrt{2\epsilon}\right)}{\erf \left(1/\sqrt{2\epsilon}\right)}
\end{equation}
where $\erf$ is the error function defined by
$%\begin{equation*}
	\erf(x) = \frac{2}{\sqrt{\pi}} \int_0^x \exp{-t^2} \dt
$. %\end{equation*}
For $0<\epsilon \ll 1$ this problem exhibits an exponential boundary layer along the outflow boundaries $x=0$ and $y=0$ of width $\Order{\sqrt{\epsilon}}$.

Away from the layers the boundary conditions on the inflow boundaries $x=1$ and $y=1$ are well approximated by $y-1$ and $x-1$ respectively. The hyperbolic solution with these boundary conditions is given by $u_0(x,y) = x+y-2$. This gives
\begin{equation} \label{cdG3ueps}
  u_\epsilon(x,y) = 2-\frac{ \erf \left(x/\sqrt{2\epsilon}\right) +\erf \left(y/\sqrt{2\epsilon}\right)}{\erf \left(1/\sqrt{2\epsilon}\right)}.
\end{equation}

We plot \eqref{cdG3true} and \eqref{cdG3ueps} for $\epsilon = 10^{-3}$ in Figure \ref{fig:cdG3true}; note that away from the layers the solution $u_\epsilon$ is close to zero. 

We let $\OC$ be a set of the type $(1-\delta,1)^2$, $0 < \delta < 1$. For each $\delta$ the supremum 
\[
\sup_{\epsilon \in (0,\epsilon_\mathrm{max}]} \norm{u_\epsilon}_\SH{2}{\OC}
\]
is finite. The dependence of $\norm{u_\epsilon}_\SH{2}{\OC}$ with respect to $\delta$ and to $\epsilon$ is illustrated in Figure \ref{fig:Ex3H2norm}.

\begin{figure}[tb] 
  \centering
  \includegraphics[height=6.5cm]{./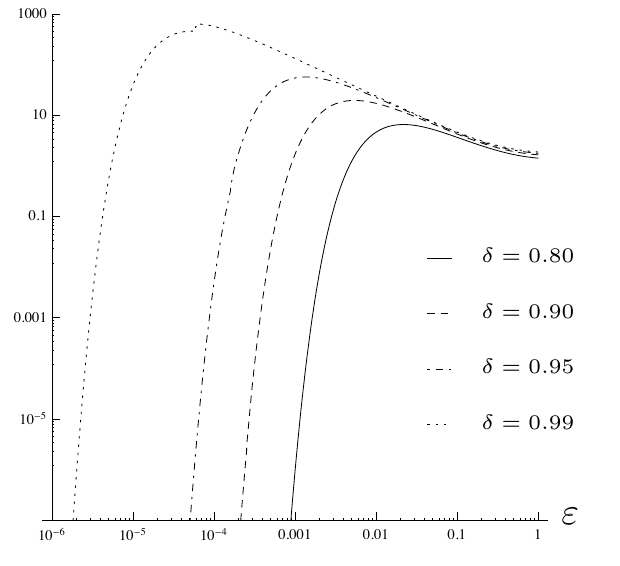}
  \caption{$\norm{u_\epsilon}_\SH{2}{\OC}$ for different values of $\delta$.}
  \label{fig:Ex3H2norm}
\end{figure}

\begin{figure}[tb]
  \centering
  \includegraphics[height=6.5cm]{./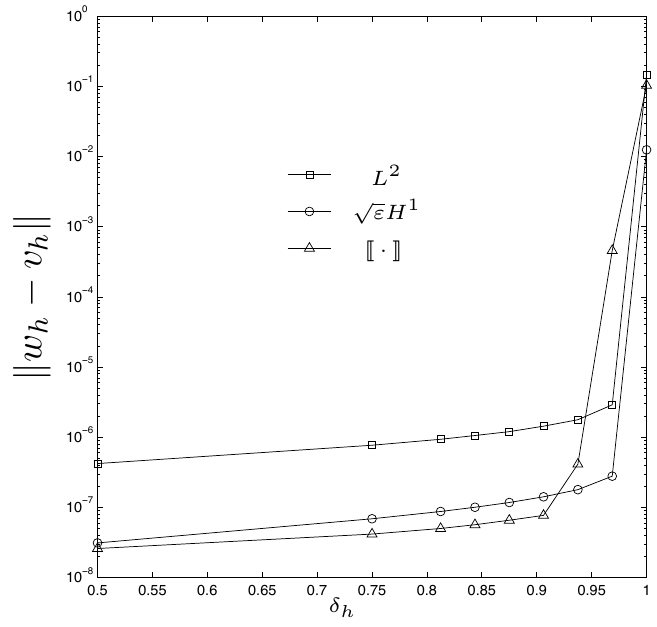}
  \caption{Difference between cdG and dG solutions.}
  \label{fig:Ex3diff}
\end{figure}

For this example $c - \half \nabla \cdot \vecb = 1$, so Assumption \ref{ass:ADRrho} is satisfied. Further, we fix $\epsilon = 10^{-6}$ and consider uniform square meshes of edge length $2^{-5}$ so that Assumption \ref{ass:Peclet} is also satisfied as the smallest local mesh P\'eclet number being $488.28$.

We define $\TC = [1-\delta_h,1]^2$, where $\delta_h = m 2^{-5}$, $m \in \set{0,\ldots,32}$. Note that, having fixed the mesh, $\delta_h$ is a discrete parameter. The interface $J$ is composed of the edges lying on the lines $y=\delta_h$ for $x\ge\delta_h$ and $x=\delta_h$ for $y \ge \delta_h$. The smallest value of $\bdotn$ is $\delta_h$ occuring on the edges containing the point $(\delta_h,\delta_h)$. Thus, in this case, Assumption \ref{ass:binterface} reads $\frac{1}{4}mh>\epsilon\frac{\sigma}{h^{3/2}}$ and  is satisfied for all $m \in \set{1,\ldots,32}$ (note that this assumption is trivially satisfied when $m=0$) for this choice of $\epsilon$ and $h$ if $\sigma<200$; in the shown computations $\sigma = 10$. 

In Figure \ref{fig:Ex3diff} we plot the $\SL{2}{\Omega}$ norm, $\sqrt{\epsilon}$ weighted $\SH{1}{\Th}$ semi-norm, and $L^2$ norm of the jumps on $\Eh$(represented by $\jump{\,\cdotp}$) for both the difference in the dG and cdG approximations and the error in the cdG approximation. Note from Figure \ref{fig:Ex3diff} that the difference in the approximations increases only very slowly until the final data points (where $\TC \approx \Th$). When the continuous region covers the layer, non-physical oscillations pollute the approximation in function of $\delta_h$. 
%\begin{figure}[htb]
%  \psfrag{delta}[rB]{\tiny{$\delta_h$}}
%  \psfrag{diffnorm}[cb][c]{$\norm{w_h - v_h}$}
%  \psfrag{truenorm}[cb][c]{$\norm{u - v_h}$}
%  \psfrag{L2x}[c][r]{\tiny{$L^2$}}
%  \psfrag{H1x}[c][r]{\tiny$\sqrt{\epsilon} H^1$}
%  \psfrag{L2j}[c][r]{\tiny{$\jump{\,\cdotp}$}}
%  \centering
%  \subfigure[Difference between cdG and dG approximations.]{\includegraphics[width=0.40\textwidth]{./pictures/Ex3diff.eps}\label{fig:Ex3diff}} \hspace{10pt}
%  \subfigure[Error in the cdG approximation.]{\includegraphics[width=0.40\textwidth]{./pictures/Ex3true.eps}\label{fig:Ex3true}}  
%  \caption{Comparing approximations for Example \ref{ex:cdG3} for $\epsilon = 10^{-6}$, in function of the parameter $\delta_h$ defining the the transition between the $\TC$ and $\TD$ regions. The loss of accurcy as $\TC$ covers the layer ($\delta\approx 1$) is apparent.}
%  \label{fig:Ex3}
%\end{figure}

In Table \ref{table:Ex3dofs} we show the number of degrees of freedom (dofs) as the continuous region is increased. Reducing the degrees of freedom to approximately 30\% of the dG method 
degrees of freedom results in only a very slight difference in the norm, thus showing that a considerable saving can be made without compromising stability.
\begin{table}[htb] 
\centering
\begin{tabular}{|c|c|c|c|}
  \hline
  $1-\delta$ &  dofs & \% of dG dofs & $\sqrt{\epsilon}\norm{\nablah(w_h - v_h)}_\SH{1}{\Omega}$ \\
  \hline
  \hline
  dG & 4096 & 100 & 0.0\\
  $8\times2^{-5}$ & 3361 & 82.1 & 3.1157e-08\\
  $16\times2^{-5}$ & 2417 & 59.0 & 6.8911e-08\\
  $24\times2^{-5}$ & 2121 & 51.8 & 8.7544e-08\\
  $30\times2^{-5}$ & 1457 & 35.6 & 1.7934e-07\\  
  $31\times2^{-5}$ & 1276 & 31.2 & 2.7896e-07\\
  cG & 1089 & 26.6 & 1.2444e-02\\
  \hline 
\end{tabular}
\caption{Degrees of freedom with $\epsilon = 10^{-6}$.}\label{table:Ex3dofs}
\end{table}

We finally remark that, at least for the example considered here, the choice of $\TC$ leaving one layer of elements at the outflow boundary is optimal. Indeed, adding even a single element to the $\TC$ region results in oscillations polluting the solution. For example, for 
\begin{equation*}
\TC = [2^{-5},1]^2 \cup ([0.5,0.5+2^{-5}]\times [0,2^{-5}]),
\end{equation*}
i.e., adding a sinlge element to $\TC$ halfway along the $x$-axis, results in
\begin{align*}
\norm{w_h - v_h}_\SL{2}{\Omega} &= 4.7966\times10^{-2},\\
\sqrt{\epsilon}\norm{\nablah(w_h - v_h)}_\SL{2}{\Omega} &= 4.5008\times10^{-3},
\end{align*}
a significant increase on the norms for $\TC = [2^{-5},1]^2$. Notice that this choice of $\TC$ violates Assumption \ref{ass:binterface}.

%%%%%%%%%%%%%%%%%%%%%%%%%%%%%%%%%%%%%%%%%%%%%%%%%%%%%%%%%%%%%%%%%%%%%%%%%%%%%%%%
%%%%%%%%%%%%%%%%%%%%%%%%%%%%%%%%%%%%%%%%%%%%%%%%%%%%%%%%%%%%%%%%%%%%%%%%%%%%%%%%
%Bibliography
%%%%%%%%%%%%%%%%%%%%%%%%%%%%%%%%%%%%%%%%%%%%%%%%%%%%%%%%%%%%%%%%%%%%%%%%%%%%%%%%
% 
%%%%%%%%%%%%%%%%%%%%%%%%%%%%%%%%%%%%%%%%%%%%%%%%%%%%%%%%%%%%%%%%%%%%%%%%%%%%%%%%

\section{Acknowledgements:} We gratefully thank the Archimedes Center for Modeling, Analysis and Computation in Crete for hosting the authors during the preparation of this manuscript.

\small

%\bibliographystyle{siam}
%\bibliography{./full_bibliography.bib}

\end{document}